\newtheorem{theorem}{Theorem}[section]
\newtheorem{lemma}[theorem]{Lemma}
\newtheorem{proposition}[theorem]{Proposition}
\theoremstyle{definition}
\newtheorem{definition}[theorem]{Definition}
\theoremstyle{remark}
\numberwithin{equation}{section}
\title[Gibbs Fields: Uniqueness and Decay of Correlations]{Gibbs Fields: Uniqueness and Decay of Correlations. Revisiting Dobrushin and Pechersky}
\thanks{This work was financially supported by the DFG through SFB 701: "Spektrale Struk-
turen und Topologische Methoden in der Mathematik" and by the
European Commission under the project STREVCOMS PIRSES-2013-612669.
D. Conache also thanks the support of the IRTG (IGK) 1132
``Stochastics and Real World Models", Universit\"at Bielefeld.}
\author{ Diana Conache}
\address{Fakut\"at f\"ur Mathematik, Universit\"at Bielefeld, Bielefeld D-33615, Germany}
\email{dputan@math.uni-bielefeld.de}
\author{Yuri  Kondratiev}
\address{Fakut\"at f\"ur Mathematik, Universit\"at Bielefeld, Bielefeld D-33615, Germany}
\email{kondrat@math.uni-bielefeld.de}
\author{ Yuri  Kozitsky}
\address{Instytut Matematyki, Uniwersytet Marii Curie-Sk{\l}odowskiej, 20-031 Lublin, Poland}
\email{jkozi@hektor.umcs.lublin.pl}
\author{ Tanja Pasurek}
\address{Fakut\"at f\"ur Mathematik, Universit\"at Bielefeld, Bielefeld D-33615, Germany}
\email{pasurek@math.uni-bielefeld.de}
\begin{document}

\begin{abstract}
We give a detailed and refined proof of the Dobrushin-Pechersky
uniqueness criterion extended to the case of Gibbs fields on
general graphs and single-spin spaces, which in particular need not
be locally compact. The exponential decay of correlations under the
uniqueness condition has also been established.
\end{abstract}

\maketitle

\section{Introduction}

A random field on a countable set ${\sf L}$ is a collection of
random variables -- {\it spins}, indexed by $\ell \in {\sf L}$. Each
variable is defined on some probability space and takes values in
the corresponding {\it single-spin} space $\Xi_\ell$. Typically, it
is assumed that each $\Xi_\ell$ is a copy of a Polish space $\Xi$.
In a `canonical version', the probability space is $(\Xi^{\sf L},
\mathcal{B}(\Xi^{\sf L}), \mu)$, where $\mu$ is a probability
measure on the Borel $\sigma$-field $\mathcal{B}(\Xi^{\sf L})$. Then
also $\mu$ is referred to as random field. A particular case of such
a field is the product measure of some single-spin probability
measures $\sigma_\ell$. {\it Gibbs random fields} with pair
interactions are constructed as `perturbations' of the product
measure $\otimes_{\ell \in {\sf L}} \sigma_\ell$ by the `densities'
\[
 \exp\left(\sum W_{\ell \ell'} (\xi_\ell, \xi_{\ell'} ) \right)
\]
where $W_{\ell \ell'}: \Xi\times \Xi \to \mathbb{R}$ are measurable
functions -- {\it interaction potentials}, whereas the sum is taken
over the set ${\sf E} \subset {\sf L}\times {\sf L}$ such that
$W_{\ell\ell'}\neq 0$ for $(\ell,\ell')\in {\sf E}$. The latter
condition defines the underlying graph ${\sf G} = ({\sf L}, {\sf
E})$. For bounded potentials, the perturbed measures usually exist.
Moreover, there is only one such measure if the potentials are small
enough and the underlying graph is enough `regular'. If the
potentials are unbounded, both the existence and uniqueness issues
turn into serious problems of the theory. Starting from the first
results in constructing Gibbs fields with `unbounded spins'
\cite{Lebow}, attempts to elaborating tools for proving their
uniqueness were being undertaken \cite{COPP,DobP,Mal}. However,
except for the results of \cite{Mal} obtained for the potentials and
single-spin measures of a special type, and also for methods
applicable to `attractive' potentials, see \cite{KP,Pasurek,Diana},
there is only one work presenting a kind of general approach to this
problem. This work is due to R. L. Dobrushin and E. A. Pechersky
\cite{DobP}, which was first published in Russian and later on
translated to English. In spite of its great importance, the work
remains almost unknown (it has been cited only few times) presumably
for the following reasons: (i) the English translation in
\cite{DobP} was made with numerous typos and errors of mathematical
nature, whereas the Russian version is inaccessible for the most of
the readers; (ii) most of the proofs in \cite{DobP} are very
involved and complex, and essential parts of them are only sketched
or even missing. In the present publication, we give a refined and
complete proof of the Dobrushin-Pechersky result extended in the
following directions: (a) we do not employ the compactness arguments
crucially used in \cite{DobP};  (b) we settle (in Proposition
\ref{TVpn} below) the measurability issues not even discussed in
\cite{DobP}; (c) instead of the cubic lattice $\mathbb{Z}^d$ we
consider general graphs as underlying sets of the Gibbs fields. The
refinement consists, among others, in explicitly calculating the
threshold value of $K$ in (\ref{K}) and the constants in the basic
estimates in Lemma \ref{R3lm}. Due to (a), as the single-spin spaces
$\Xi$ one can consider just standard Borel spaces, e.g., infinite
dimensional spaces which are not locally compact, see
\cite{KP,Pasurek}. Due to (c), one can apply the criterion to varios
models employing graphs as underlying sets. One can also apply the
criterion to the equilibrium states of continuum particle systems,
see \cite[Chapter 4]{Diana} and Section \ref{222} below.

The structure of this paper is as follows. In Section \ref{2SEC}, we
give necessary preliminaries and formulate the results in Theorems \ref{1tm} and \ref{2tm}. Section
\ref{3SEC} contains the proof of these theorems
based on the estimates obtained in Lemmas \ref{R3lm} and \ref{dclm},
respectively, as well as on a number of other facts proved thein.
In Section \ref{4SEC}, we perform detailed constructions
yielding the proof of the mentioned lemmas.

\section{Setup and the Result}

\label{2SEC}

\subsection{Notations and preliminaries}
The underlying set for the spin configurations of our model is a
countable simple connected graph $({\sf L}, {\sf E})$.  For a vertex
$\ell \in {\sf L}$, by $\partial \ell$ we denote the neighborhood of
$\ell$, i.e., the  set of vertices adjacent to $\ell$. The vertex
{\it degree} $\varDelta_\ell$ is then the cardinality of $\partial
\ell$. The only assumption regarding the graph is that
\begin{equation}
  \label{1}
 \sup_{\ell\in {\sf L}}\varDelta_\ell=: \varDelta < \infty,
\end{equation}
i.e., the vertex degree is globally bounded. A given ${\sf V}\subset {\sf L}$ is said to be an {\it independent set} of vertices if
\begin{equation}
\label{2}
\forall \ell \in {\sf V} \quad \partial l \cap {\sf V} = \emptyset.
\end{equation}
The {\it chromatic number} $\chi\in \mathbb{N}$ is the smallest number such that
\begin{equation}
 \label{3}
 {\sf L} = \bigcup_{j=0}^{\chi-1} {\sf V}_j, \qquad {\sf V}_j \ - \ {\rm independent}, \ j=0, \dots , \chi-1.
\end{equation}
Obviously, $\chi \leq \varDelta+1$. However, by Brook's theorem,
see, e.g., \cite{Lovasz}, for our graph we have that $\chi \leq
\varDelta$.

For a measuarble space $(E, \mathcal{E})$, by $\mathcal{P}(E)$ we
denote the set of all probability measures on $\mathcal{E}$. All
measurable spaces we deal with in this article are standard Borel
spaces. The prototype example is a Polish space endowed with the
corresponding Borel $\sigma$-field. For $\sigma \in \mathcal{P}(E)$
and a suitable function $f:E \to \mathbb{R}$, we write
\begin{equation*}
 \sigma(f) = \int_{E} f d \sigma.
\end{equation*}
For our model, the single-spin spaces $(\Xi_\ell,
\mathcal{B}(\Xi_\ell))$, $\ell \in {\sf L}$, are copies of a
standard Borel space $(\Xi, \mathcal{B}(\Xi))$. Then the
configuration space $X = \Xi^{\sf L}$ equipped with the product
$\sigma$-field $\mathcal{B}(X)=\mathcal{B}(\Xi^{\sf L})$ is also a
standard Borel space. Likewise,  for a nonempty ${\sf D} \subset
{\sf L}$, $\Xi^{\sf D}$ is the product of $\Xi_\ell$, $\ell \in {\sf
D}$. Its elements are denoted by $x_{\sf D} = (x_\ell)_{\ell \in
{\sf D}}$, whereas the elements of $X$ are written simply as $x=
(x_\ell)_{\ell \in {\sf L}}$. For $y,z\in X$, by $y_{\sf D} \times
z_{{\sf D}^c}$ we denote the configuration $x\in X$ such that
$x_{\sf D} = y_{\sf D}$ and $x_{{\sf D}^c} = z_{{\sf D}^c}$, ${\sf
D}^c:= {\sf L} \setminus {\sf D}$. For ${\sf D}\subsetneq {\sf L}$,
we denote $\mathcal{F}_{\sf D} = \mathcal{B}(\Xi^{{\sf D}^c})$ and
write $\mathcal{F}_\ell$ if ${\sf D}=\{\ell\}$.
\begin{definition}
  \label{0df}
Given $\ell \in {\sf L}$, let $ \pi_\ell:=\{\pi_\ell^x : x \in
X\}\subset \mathcal{P}(\Xi_\ell)$ be such that the map $X\ni x \mapsto
\pi^x_\ell (A)\in \mathbb{R}$ is $\mathcal{F}_\ell$-measurable for
each $A\in \mathcal{B}(\Xi_\ell)$. A family $\pi=\{\pi_\ell\}_{\ell \in
{\sf L}}$ of the maps of this kind is said to be a {\it one-site}
specification.
\end{definition}
\begin{definition}
 \label{1df}
A given $\mu \in \mathcal{P}(X)$ is said to be {\it consistent} with
a one-site specification $\pi$ in a given ${\sf D}\subseteq {\sf L}$
if $\mu(\cdot|\mathcal{F}_\ell)(x) = \pi_\ell^x$ for $\mu$-almost
all $x$ and each $\ell \in {\sf D}$. By $\mathcal{M}_{\sf D}(\pi)$
we denote the set of all  $\mu\in \mathcal{P}(X)$ consistent with
$\pi$ in ${\sf D}$. We say that $\mu$ is consistent with $\pi$ if it
is consistent in ${\sf L}$, and write just $\mathcal{M}(\pi)$ in
this case.
\end{definition}
Obviously, $\mu\in \mathcal{M}(\pi)$ if and only if it satisfies the
following equation
\begin{eqnarray}
  \label{5}
\mu(A)& = & \int_X \int_X \mathbb{I}_A (x) \pi_\ell^y (dx_\ell) \prod_{\ell'\neq \ell} \delta_{y_{\ell'}} (d x_{\ell'}) \mu (dy)\\[.2cm]
& = & \int_X \left(\int_\Xi \mathbb{I}_A (\xi \times y_{\{\ell\}^c}) \pi_\ell^{y} (d\xi) \right)  \mu (dy),\nonumber
\end{eqnarray}
which holds for every $\ell\in {\sf L}$ and $A\in \mathcal{B}(X)$.
Here, for $\eta\in \Xi$, $\delta_\eta\in \mathcal{P}(\Xi)$ is the
Dirac measure centered at $\eta$ and $\mathbb{I}_A$ stands for the
indicator of $A$.

For a standard Borel space $(E, \mathcal{E})$, let $(E^2,
\mathcal{E}^2)$ be the product space. For $\sigma, \varsigma \in
\mathcal{P}(E)$, let $\varrho\in \mathcal{P}(E^2)$ be such that
$\varrho(A\times E)= \sigma(A)$ and $\varrho(E\times A) = \varsigma
(A)$ for all $A\in \mathcal{B}(E)$. Then we say that $\varrho$ is a
{\it coupling} of $\sigma$ and $\varsigma$. By $\mathcal{C}(\sigma,
\varsigma)$ we denote the set of all such couplings.

For $\xi, \eta \in \Xi$, we set
\begin{equation*}
 \upsilon(\xi, \eta) = \left\{ \begin{array}{ll} 0, \quad {\rm if} \ \  \xi = \eta;\\[.3cm]
                               1, \quad {\rm otherwise},
                              \end{array} \right.
\end{equation*}
which is a measurable function on $\Xi^2$ since $\Xi$ is a standard Borel space. Then we equip
$\mathcal{P}(\Xi)$ with the {\it total variation distance}
\begin{equation}
  \label{TV}
d(\sigma, \varsigma) = \sup_{A \in \mathcal{B}(\Xi)} |\sigma (A) -
\varsigma (A)|,
\end{equation}
that, by duality, can also be written in the form
\begin{equation*}
d(\sigma, \varsigma) = \inf_{\varrho\in \mathcal{C}(\sigma, \varsigma)} \int_{\Xi^2}  \upsilon (\xi, \eta) \varrho( d \xi , d \eta).
\end{equation*}
\begin{proposition}
  \label{TVpn}
For each $\ell \in {\sf L}$ and $(x,y)\in X^2$, there exists
$\varrho^{x,y}_\ell \in \mathcal{C}(\pi_\ell^x, \pi_\ell^y)$ such
that: (a) for each $B\in \mathcal{B}(\Xi^2_\ell)$, the map $X^2\ni
(x,y)\mapsto \varrho^{x,y}_\ell(B)$ is $\mathcal{F}_\ell^2$-measurable; (b) the following holds
\begin{equation}
 \label{7}
 d(\pi_\ell^x,  \pi_\ell^y) = \int_{\Xi^2}  \upsilon (\xi, \eta) \varrho_\ell^{x,y}( d \xi , d \eta).
\end{equation}
\end{proposition}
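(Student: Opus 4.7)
The plan is to construct the \emph{maximal coupling} of $\pi_\ell^x$ and $\pi_\ell^y$ explicitly and then verify joint measurability in $(x,y)$. For two probability measures on a standard Borel space the infimum in the dual formula for $d(\sigma,\varsigma)$ is attained by the classical ``coupling through the pointwise minimum of densities''; the only real difficulty is arranging this construction to depend measurably on the pair $(x,y)$.

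First I would introduce the reference kernel $\lambda^{x,y}:=\tfrac12(\pi_\ell^x+\pi_\ell^y)$, which is itself an $\mathcal{F}_\ell^2$-measurable kernel and dominates both $\pi_\ell^x$ and $\pi_\ell^y$. The key step is to exhibit jointly $\mathcal{F}_\ell^2\otimes\mathcal{B}(\Xi)$-measurable densities $f^{x,y}=d\pi_\ell^x/d\lambda^{x,y}$ and $g^{x,y}=d\pi_\ell^y/d\lambda^{x,y}$, which moreover satisfy $f^{x,y}+g^{x,y}=2$ $\lambda^{x,y}$-a.e. This is where the standard Borel hypothesis on $\Xi$ is genuinely used: I would pick a refining sequence of finite measurable partitions $\{\mathcal{P}_n\}$ generating $\mathcal{B}(\Xi)$, form the elementary ratios
\begin{equation*}
f_n^{x,y}(\xi)=\frac{\pi_\ell^x(A_n(\xi))}{\lambda^{x,y}(A_n(\xi))}
\end{equation*}
on atoms $A_n(\xi)\in\mathcal{P}_n$ of positive $\lambda^{x,y}$-mass (and $0$ otherwise), and set $f^{x,y}(\xi):=\limsup_n f_n^{x,y}(\xi)$. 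Each $f_n^{x,y}$ is manifestly jointly measurable in $(x,y,\xi)$, hence so is the limsup, and by L\'evy's upward theorem the limit agrees $\lambda^{x,y}$-a.s.\ with the Radon--Nikodym derivative. This joint-measurability step is, I expect, the main obstacle; the rest is bookkeeping.

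With jointly measurable $f^{x,y},g^{x,y}$ in hand, set $m^{x,y}:=f^{x,y}\wedge g^{x,y}$ and $D(x,y):=1-\int_\Xi m^{x,y}\,d\lambda^{x,y}$, and note that the identity $|a-b|=a+b-2(a\wedge b)$ combined with (\ref{TV}) yields $D(x,y)=d(\pi_\ell^x,\pi_\ell^y)$. On the measurable set $\{D>0\}$ I define, for $B\in\mathcal{B}(\Xi_\ell^2)$,
\begin{equation*}
\varrho_\ell^{x,y}(B)=\int_\Xi\mathbb{I}_B(\xi,\xi)m^{x,y}(\xi)\,\lambda^{x,y}(d\xi)+\frac{1}{D(x,y)}\int_{\Xi^2}\mathbb{I}_B(\xi,\eta)\bigl(f^{x,y}-m^{x,y}\bigr)(\xi)\bigl(g^{x,y}-m^{x,y}\bigr)(\eta)\,\lambda^{x,y}(d\xi)\lambda^{x,y}(d\eta),
\end{equation*}
while on $\{D=0\}$ (where $\pi_\ell^x=\pi_\ell^y$) I take the diagonal coupling $\varrho_\ell^{x,y}(B):=\int\mathbb{I}_B(\xi,\xi)\pi_\ell^x(d\xi)$. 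A direct marginal check, using $\int(f^{x,y}-m^{x,y})\,d\lambda^{x,y}=D(x,y)=\int(g^{x,y}-m^{x,y})\,d\lambda^{x,y}$, confirms $\varrho_\ell^{x,y}\in\mathcal{C}(\pi_\ell^x,\pi_\ell^y)$; the observation that $(f^{x,y}-m^{x,y})$ and $(g^{x,y}-m^{x,y})$ have $\lambda^{x,y}$-a.e.\ disjoint supports then gives $\int\upsilon\,d\varrho_\ell^{x,y}=D(x,y)$, establishing (\ref{7}) and hence (b). Finally (a) follows from Fubini's theorem applied to the jointly measurable integrands, once one notes that the splitting sets $\{D>0\}$ and $\{D=0\}$ are themselves $\mathcal{F}_\ell^2$-measurable.
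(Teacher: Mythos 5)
Your proof is correct, and the coupling you end up with is the same object as in the paper: Lindvall's maximal ($\gamma$-)coupling, consisting of a diagonal part carrying the common mass $\pi_\ell^x\wedge\pi_\ell^y$ and a normalized product of the two residuals. The difference lies in how the measurability in $(x,y)$ --- which is the real content of the proposition --- is handled. The paper works directly with the setwise minimum $(\pi_\ell^x\wedge\pi_\ell^y)(A):=\min\{\pi_\ell^x(A),\pi_\ell^y(A)\}$, observes that $\{(x,y):a\le(\pi_\ell^x\wedge\pi_\ell^y)(A)\}=\{x:a\le\pi_\ell^x(A)\}\times\{y:a\le\pi_\ell^y(A)\}$ to get $\mathcal{F}_\ell^2$-measurability for each fixed $A$, transports this set function to the diagonal via $\psi(\xi)=(\xi,\xi)$, and then invokes the product-$\sigma$-field argument on rectangles $A_1\times A_2$. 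You instead realize $\pi_\ell^x\wedge\pi_\ell^y$ through densities with respect to the dominating kernel $\tfrac12(\pi_\ell^x+\pi_\ell^y)$, and obtain joint measurability of these densities in $(x,y,\xi)$ by the refining-partition/martingale argument. Your route costs more work up front but buys something real: the setwise minimum $A\mapsto\min\{\pi_\ell^x(A),\pi_\ell^y(A)\}$ is only superadditive, not a measure, so the paper's formulas for $\omega_\ell^{x,y}$ and for the residuals $\pi_\ell^x-\pi_\ell^x\wedge\pi_\ell^y$ implicitly require the lattice infimum of measures, which is exactly what your density construction supplies together with its joint measurability; the paper's cheaper measurability argument applies literally only to the setwise minimum. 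The paper's approach, in turn, buys brevity and avoids the martingale machinery by leaning on the reference to Lindvall for the existence of the infimum measure. One small point to make explicit in your write-up: the measurability of $(x,y)\mapsto\int_\Xi F(x,y,\xi)\,\lambda^{x,y}(d\xi)$ for jointly measurable $F$ is a kernel-integration fact (monotone class over $F=u\otimes v$) rather than Fubini proper, but this is routine.
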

\begin{proof}
Set
\begin{equation*}
(\pi_\ell^x \wedge \pi_\ell^y) (A) = \min\{\pi_\ell^x (A);
\pi_\ell^y(A) \}, \qquad A\in \mathcal{B}(\Xi_\ell).
\end{equation*}
In view of the measurability as in Definition \ref{0df}, the map
$X^2\ni (x,y) \mapsto (\pi_\ell^x \wedge \pi_\ell^y) (A)$ is
$\mathcal{F}_\ell^2$-measurable since, given
$a\in [0,1]$, we have that
\begin{eqnarray*}
\{ (x,y): a \leq (\pi_\ell^x \wedge \pi_\ell^y) (A) \} =  \{ x:
a\leq \pi_\ell^x (A)\}^2.
\end{eqnarray*}
Then both maps $(x,y) \mapsto (\pi^x_\ell - \pi_\ell^x \wedge
\pi_\ell^y)(A)$ and $(x,y) \mapsto (\pi^y_\ell - \pi_\ell^x \wedge
\pi_\ell^y)(A)$ are $\mathcal{F}_\ell^2$-measurable. By (\ref{TV}) also $(x,y) \mapsto d
(\pi_\ell^x,  \pi_\ell^y)$ is measurable in the same sense.

Set $D_\ell = \{(\xi, \xi): \xi \in \Xi_\ell\}$.
Since $\Xi_\ell$ is a standard Borel space, the map $\xi \mapsto
\psi (\xi) = (\xi, \xi)\in D_\ell$ is measurable. Then, for each $B\in
\mathcal{B}(\Xi^2_\ell)$, we have that $\psi^{-1} (B\cap D_\ell) \in
\mathcal{B}(\Xi_\ell)$, which allows us to define $\omega_\ell^{x,y} \in
\mathcal{P}(\Xi^2_\ell)$ by setting
\[
\omega_\ell^{x,y} (B) = (\pi_\ell^x \wedge \pi_\ell^y) \left(
\psi^{-1} (B\cap D_\ell) \right).
\]
The coupling for which (\ref{7}) holds has the form, see
 \cite[Eq. (5.3), page 19]{Lind},
\begin{equation*}
\varrho_\ell^{x,y} = \omega_\ell^{x,y} + (\pi^x_\ell - \pi_\ell^x
\wedge \pi_\ell^y) \otimes (\pi^y_\ell - \pi_\ell^x \wedge
\pi_\ell^y)/ d (\pi_\ell^x,  \pi_\ell^y).
\end{equation*}
Then the $\mathcal{F}_\ell^2$-measurability of
the maps $(x,y) \mapsto \varrho^{x,y}_\ell (A_1 \times A_2)$,
$A_1,A_2 \in \mathcal{B}(\Xi_\ell)$, follows by the arguments given
above. This yields the proof of claim (a) as $\mathcal{B}(\Xi^2_\ell)$ is
a product $\sigma$-field.
\end{proof}

Let $\varpi$ be the family of $\varpi_\ell = \{\varpi_\ell^{x,y}:
(x,y) \in X^2\}$, $\ell \in {\sf L}$, such that each
$\varpi_\ell^{x,y}$ is in $\mathcal{P}(\Xi^2_\ell)$ and, for any $B\in
\mathcal{B}(\Xi^2_\ell)$, the map $(x,y) \mapsto \varpi_\ell^{x,y} (B)$
is $\mathcal{F}_\ell^2$-measurable. Then
$\varpi$ is a one-point specification in the sense of Definition
\ref{1df}, which determines the set $\mathcal{M}(\varpi)$ of $\nu\in
\mathcal{P}(X^2)$ consistent with $\varpi$. Like in (\ref{5}), $\nu \in
\mathcal{M}(\varpi)$ if and only if it satisfies
\begin{eqnarray}
  \label{8}
\nu(B)  & = & \int_{X^2} \int_{X^2} \mathbb{I}_B(x,y)  \varpi_\ell^{y,\tilde{y}}(dx_\ell, d\tilde{x}_\ell)\\[.2cm]
& \times & \prod_{\ell'\neq \ell} \delta_{y_{\ell'}} (d x_{\ell'})\delta_{\tilde{y}_{\ell'}} (d \tilde{x}_{\ell'})\nu(dy, d\tilde{y}),\nonumber
\end{eqnarray}
which holds for all $\ell\in {\sf L}$ and $B\in \mathcal{B}(X^2)$.
\begin{proposition}
  \label{1pn}
Suppose that $\varpi^{x, \tilde{x}}_\ell\in \mathcal{C}(\pi_\ell^x, \pi_\ell^{\tilde{x}})$ for all $\ell\in {\sf L}$ and
$x, \tilde{x}\in X$. Then each $\nu\in \mathcal{M}(\varpi)$ is a coupling of some $\mu_1 , \mu_2 \in \mathcal{M}(\pi)$.
\end{proposition}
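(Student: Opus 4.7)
The plan is to produce $\mu_1$ and $\mu_2$ as the two marginals of $\nu$ and then verify, directly from the defining equation \eqref{8}, that each of these marginals satisfies \eqref{5}. Specifically, given $\nu \in \mathcal{M}(\varpi)$, I set
\[
\mu_1(A) = \nu(A \times X), \qquad \mu_2(A) = \nu(X \times A), \qquad A \in \mathcal{B}(X).
\]
By construction $\nu \in \mathcal{C}(\mu_1,\mu_2)$, so the only substantive task is to check that $\mu_1, \mu_2 \in \mathcal{M}(\pi)$.

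Next I would fix $\ell \in {\sf L}$ and $A \in \mathcal{B}(X)$, and apply \eqref{8} to the set $B = A \times X$. Since $\mathbb{I}_{A \times X}(x,\tilde{x}) = \mathbb{I}_A(x)$ does not depend on $\tilde{x}$, the inner integrals over $d\tilde{x}_\ell$ and over the $d\tilde{x}_{\ell'}$, $\ell' \neq \ell$, simply marginalize $\varpi_\ell^{y,\tilde{y}}$ in its second coordinate. Using the hypothesis $\varpi_\ell^{y,\tilde{y}} \in \mathcal{C}(\pi_\ell^y, \pi_\ell^{\tilde{y}})$, this second marginal is exactly $\pi_\ell^y(dx_\ell)$, so the inner double integral collapses to
\[
\int_X \mathbb{I}_A(x)\, \pi_\ell^y(dx_\ell) \prod_{\ell' \neq \ell} \delta_{y_{\ell'}}(dx_{\ell'}).
\]
After this reduction the outer integral over $d\tilde{y}$ is free and returns $\mu_1(dy)$, yielding precisely \eqref{5} for $\mu_1$. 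The argument for $\mu_2$ is symmetric, taking $B = X \times A$ and using the first-marginal property of $\varpi_\ell^{y,\tilde{y}}$.

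I do not anticipate a hard conceptual step: the whole proof is an exercise in unwinding \eqref{8} and using Fubini in the inner integrals, which is legitimate because we are integrating bounded indicators against the probability measures $\varpi_\ell^{y,\tilde{y}}$ and Dirac masses. The only small points requiring care are (i) that the measurability conditions in Proposition \ref{TVpn} ensure the relevant integrands are jointly measurable on $X^2$, so the marginalization of $\nu$ is licit, and (ii) that in \eqref{8} the distinguished site $\ell$ is treated by $\varpi_\ell^{y,\tilde{y}}$ while all other sites are pinned by Dirac masses, which is what allows the second-coordinate marginal to reproduce the one-site structure \eqref{5} with respect to $\mu_1$ (and likewise for $\mu_2$).
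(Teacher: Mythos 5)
Your proof is correct and follows essentially the same route as the paper: take $B=A\times X$ in \eqref{8}, use the coupling hypothesis to reduce $\varpi_\ell^{y,\tilde y}$ to $\pi_\ell^y$, and integrate out $\tilde y$ to recover \eqref{5} for the marginal $\mu_1$, with the symmetric argument for $\mu_2$. (Only a cosmetic slip: integrating out the \emph{second} coordinate yields the \emph{first} marginal $\pi_\ell^y$, not the second; the substance is unaffected.)
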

\begin{proof}
The equality $\mu_1(A)= \nu(A\times X)$, $A\in \mathcal{B}(X)$, determines a probability measure on $X$.
Thus, for $A\in \mathcal{B}(X)$, by (\ref{8}) we get
\begin{eqnarray*}
\mu_1(A) & = & \int_{X^2} \int_{X^2}\mathbb{I}_A (x) \varpi_\ell^{y,\tilde{y}}(dx_\ell, d\tilde{x}_\ell)
 \prod_{\ell'\neq \ell} \delta_{y_{\ell'}} (d x_{\ell'})\delta_{\tilde{y}_{\ell'}} (d \tilde{x}_{\ell'})\nu(dy, d\tilde{y})\\[.2cm]
& = & \int_{X^2} \int_{X}\mathbb{I}_A (x) \pi_\ell^y (dx_\ell) \prod_{\ell'\neq \ell} \delta_{y_{\ell'}}(dx_{\ell'})\nu(dy, d\tilde{y})\\[.2cm]
& = & \int_{X} \int_{X}\mathbb{I}_A (x) \pi_\ell^y (dx_\ell) \prod_{\ell'\neq \ell} \delta_{y_{\ell'}}(dx_{\ell'})
\int_X \nu(dy, d\tilde{y})\nonumber \\[.2cm]
& = & \int_{X} \int_{X}\mathbb{I}_A (x) \pi_\ell^y (dx_\ell) \prod_{\ell'\neq \ell} \delta_{y_{\ell'}}(dx_{\ell'}) \mu_1(dy).
\end{eqnarray*}
Therefore, $\mu_1$ solves (\ref{5}) and hence $\mu_1\in \mathcal{M}(\pi)$. The same is true for the second marginal measure $\mu_2$.
\end{proof}

\subsection{The results}
Our main concern is under which conditions imposed on the family
$\pi$ the set $\mathcal{M}(\pi)$ contains one element at most. If
each $\pi_\ell^x$ is independent of $x$, the unique element of
$\mathcal{M}(\pi)$ is the product measure $\otimes_{\ell \in {\sf
L}} \pi_\ell$, which readily follows from (\ref{5}). Therefore, one
may try to relate the uniqueness in question to the weak dependence
of $\pi_\ell^x$ on $x$,  formulated in terms of the metric defined
in (\ref{7}). Thus, let us take $x,y\in X$ such that $x=y$ off some
$\ell' \in \partial \ell$, and consider $d(\pi_\ell^x,
\pi_{\ell}^y)$. If this quantity were bounded by a certain
$\kappa_{\ell\ell'}$, uniformly in $x$ and $y$, this bound
(Dobrushin's estimator, cf. \cite[pp. 20, 21]{Beth}) could be used
to formulate the celebrated Dobrushin uniqueness condition in the
form
\begin{equation}
  \label{9}
  \sup_{\ell \in {\sf L}} \sum_{\ell'\in \partial \ell} \kappa_{\ell \ell'} =: \bar{\kappa} < 1.
\end{equation}
However, in a number of applications, especially where $\Xi$ is a noncompact topological space, the mentioned boundedness does not hold.
The way of treating such cases suggested in \cite{DobP} may be outlined as follows. Assume that there exists a matrix $(\kappa_{\ell \ell'})$ with the property as in (\ref{9})
such that, for each $\ell \in {\sf L}$, the following holds
\begin{equation}
  \label{10}
 d(\pi_\ell^x, \pi_{\ell}^y) \leq \sum_{\ell' \in \partial \ell} \kappa_{\ell\ell'} \upsilon(x_{\ell'}, y_{\ell'}),
\end{equation}
for $x$ and $y$ belonging to the set
\begin{equation}
  \label{11}
X_\ell (h,K) := \{ x \in X: h(x_{\ell'}) \leq K \ \ {\rm for} \ {\rm all} \ \ell'  \in \partial \ell\}.
\end{equation}
Here $K>0$ is a parameter and $h:\Xi \to [0,+\infty)$ is a given
measurable function. Clearly, if $h$ is bounded, then $X_\ell
(h,K)=X$ for big enough $K$, and hence (\ref{10}) turns into the
mentioned Dobrushin condition. Thus, in order to cover the case of
interest we have to take $h$ unbounded and  $\pi_\ell^x$-integrable,
with an appropriate control of the dependence of $\pi_\ell^x(h)$ on
$x$. Namely, we shall assume that, for each $\ell\in {\sf L}$ and $x
\in X$, the following holds
\begin{equation}
  \label{12}
 \pi_\ell^x (h) \leq 1 + \sum_{\ell'
 \in \partial \ell} c_{\ell \ell'} h(x_{\ell'}),
\end{equation}
for some  matrix $c=(c_{\ell \ell'})$, which satisfies
\begin{equation}
  \label{13}
(a) \quad  c_{\ell\ell'} \geq 0; \qquad \quad (b) \quad \sup_{\ell
\in {\sf L}} \sum_{\ell' \in \partial \ell} c_{\ell \ell'} =:\bar{c}
< 1/ \varDelta^\chi.
\end{equation}
In the original work \cite{DobP}, the first summand on the
right-hand side of (\ref{12}) is a constant $C>0$, the value of
which  determines the scale of $K$, see (\ref{11}). We thus take it
as above for the sake of convenience.
\begin{definition}
 \label{2df}
Let $h$, $K$, $\kappa$,  and $c$ be as in (\ref{9}) -- (\ref{13}).
Then by $\Pi(h,K,\kappa,c)$ we denote the set of one-site
specifications $\pi$ for which both estimates (\ref{10}), (\ref{11})
and (\ref{12}) hold true for each $\ell \in {\sf L}$.
\end{definition}
Given $\mu\in \mathcal{M}(\pi)$,  the integrability assumed in (\ref{12}) does not yet imply that $h$ is $\mu$-integrable.
For $\pi$ satisfying (\ref{12}),  by
$\mathcal{M}(\pi, h)$ we denote the subset of $\mathcal{M}(\pi)$ consisting of those measures for which the following holds
\begin{equation}
 \label{14}
 \mu(h):=\sup_{\ell \in {\sf L}} \int_{X} h(x_\ell) \mu(d x) < \infty.
\end{equation}
In a similar way, we introduce the set $\mathcal{M}_{\sf D}(\pi, h)$ for a given ${\sf D}\subset {\sf L}$, cf. Definition \ref{1df}.

From now on we fix the graph, the function $h$, and the matrices $c$
and $\kappa$. Thereafter, we set
\begin{equation}
 \label{K}
K_* = \max\left\{\frac{4 \varDelta^{\chi+1}}{\bar{c}(1 - \bar{\kappa})} ; \ \
\frac{2 \varDelta^{\chi+1} (2 \varDelta^{\chi-1} +1-\bar{c} \varDelta^\chi)}{(1-\bar{\kappa})^2( 1- \bar{c} \varDelta^\chi)}\right\}.
\end{equation}
\begin{theorem}
\label{1tm}
For each $K>K_*$ and
$\pi \in  \Pi(h,K,\kappa,c)$, the
set $\mathcal{M}(\pi, h)$ contains at most one element.
\end{theorem}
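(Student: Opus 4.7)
The plan is to apply the coupling method. Given two candidates $\mu_1, \mu_2 \in \mathcal{M}(\pi, h)$, I would first assemble, via Proposition \ref{TVpn}, the family $\varrho^{x,y}_\ell$ of optimal couplings of $\pi_\ell^x$ and $\pi_\ell^y$, and regard it as a two-point specification $\varpi$. By Proposition \ref{1pn}, any $\nu \in \mathcal{M}(\varpi)$ couples a pair in $\mathcal{M}(\pi)$; one then produces such a $\nu$ with marginals $\mu_1$ and $\mu_2$, which I would construct as an accumulation point of finite-volume coupling measures, with tightness supplied by the moment bound (\ref{14}). The aim is to show that this $\nu$ is concentrated on the diagonal $\{x=y\}$, which forces $\mu_1 = \mu_2$.

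To that end I would monitor the site-wise disagreement
\begin{equation*}
p_\ell := \int_{X^2} \upsilon(x_\ell, y_\ell)\, \nu(dx, dy).
\end{equation*}
Consistency (\ref{8}) combined with identity (\ref{7}) yields the identity $p_\ell = \int d(\pi_\ell^y, \pi_\ell^{\tilde y})\, \nu(dy, d\tilde y)$. Splitting the integration region according to whether $y$ and $\tilde y$ both lie in $X_\ell(h, K)$, the good piece is dominated via (\ref{10}) by $\sum_{\ell' \in \partial \ell} \kappa_{\ell\ell'} p_{\ell'}$, while on the complement I would bound $d \leq 1$ and use Markov's inequality against (\ref{14}), producing a remainder of order $\varDelta\, \mu(h)/K$. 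Taking $\sup_\ell$ and applying $\bar\kappa < 1$ alone would give only the crude estimate $\sup_\ell p_\ell = O(1/K)$, not zero.

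The genuine sharpening comes from iterating along the chromatic decomposition (\ref{3}): applying the specification $\chi$ times, one independent layer ${\sf V}_j$ at a time, composes the moment bound (\ref{12}) $\chi$ times, and the overall contraction factor for the $h$-moments is $\bar c\, \varDelta^\chi < 1$ thanks to (\ref{13}). This, I expect, is what Lemma \ref{R3lm} encodes: a refined, chromatic a priori estimate for $\nu(\{h(y_{\ell'}) > K\})$, uniform in $\ell$. Lemma \ref{dclm} should then package the associated contraction for the disagreements $p_\ell$. The explicit form of $K_*$ in (\ref{K}) appears as the precise threshold at which the compounded factor becomes strictly less than one: the first entry $4\varDelta^{\chi+1}/[\bar c(1-\bar\kappa)]$ controls the level at which the Markov-type tail beats the chromatic iteration, while the more elaborate second entry governs the quadratic remainder produced by the iteration itself.

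The main obstacle, and the content that Section \ref{4SEC} presumably carries, is the bookkeeping of this chromatic iteration: one must propagate the moment inequality through all $\chi$ layers while simultaneously preserving measurability and consistency of the coupling assembled from the $\varrho_\ell^{x,y}$ across layers. Granted Lemmas \ref{R3lm} and \ref{dclm}, the conclusion of the theorem is then immediate: for every $K > K_*$ they combine into a strict inequality of the form $\sup_\ell p_\ell \leq \theta(K) \sup_\ell p_\ell$ with $\theta(K) < 1$, which forces $p_\ell \equiv 0$ and hence $\mu_1 = \mu_2$.
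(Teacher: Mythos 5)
Your proposal reproduces the overall flavor of the paper's argument (coupling, chromatic decomposition, a contraction that kicks in for $K>K_*$), but it has two genuine gaps. First, the construction of the coupling. You propose to produce a $\nu\in\mathcal{M}(\varpi)$ with marginals $\mu_1,\mu_2$ as an accumulation point of finite-volume couplings, ``with tightness supplied by the moment bound (\ref{14})''. That step fails in the generality of the theorem: (\ref{14}) gives tightness only if $h$ has compact level sets, and the paper explicitly drops that assumption (the single-spin space need not be locally compact). Moreover, Proposition \ref{1pn} only says that an element of $\mathcal{M}(\varpi)$ couples \emph{some} pair in $\mathcal{M}(\pi)$; it does not let you prescribe the marginals. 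The paper never needs a consistent fixed-point coupling at all: it starts from an arbitrary $\nu_0\in\mathcal{C}(\mu_1,\mu_2)$ and applies the one-site reconstruction map $R_\ell$ of (\ref{16}) sweep by sweep over the independent sets ${\sf V}_0,\dots,{\sf V}_{\chi-1}$, each step preserving the marginals (Lemma \ref{Rlm}(a)); the only limits taken are $\mathfrak{L}$-limits of sequences that \emph{stabilize} on local sets, so no compactness is invoked. The theorem then follows from $\gamma(\hat\nu_n)\to 0$ via Lemma \ref{1lm}, not from a self-bounding inequality for a fixed consistent $\nu$.

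Second, and more substantively, you correctly observe that the one-step inequality for $p_\ell$ leaves a remainder of order $1/K$ that does not vanish, but your proposed repair --- a refined chromatic a priori bound on $\nu(\{h(y_{\ell'})>K\})$ --- is not what closes the argument and would not remove that floor, since Markov against the full moment $\mu(h)$ always leaves a fixed additive constant. The missing idea is that the remainder in (\ref{21}) is not $\mu(h)/K$ but $K^{-1}\sum\nu(I_{\ell_2}H^i_{\ell_1})$, i.e.\ the $h$-moment restricted to the \emph{disagreement event}. One must therefore track the second quantity $\lambda(\nu)=\max_i\sup_{\ell,\ell'}\nu(I_\ell H^i_{\ell'})$ of (\ref{26}) alongside $\gamma(\nu)$, and Lemma \ref{R3lm} is precisely the joint $2\times 2$ matrix contraction $(\gamma,\lambda)\mapsto M(K)(\gamma,\lambda)$ whose spectral radius $r_K$ drops below $1$ exactly when $K>K_*$; both of these quantities shrink together under iteration, which is what kills the $O(1/K)$ floor. (You also misattribute Lemma \ref{dclm}: it is the localized, $\mathcal{F}_{{\sf D}_{N-1}}$-measurable version of Lemma \ref{R3lm} used for the decay of correlations in Theorem \ref{2tm}, and plays no role in the uniqueness proof.)
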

An important characteristic of the states $\mu\in \mathcal{M}(\pi)$
is the decay of correlations. Fix two distinct vertices $\ell_1,
\ell_2 \in {\sf L}$ and consider bounded functions $f, g:X\to
\mathbb{R}_{+}$, such that $f$ is
$\mathcal{B}(\Xi_{\ell_1})$-measurable and $g$ is
$\mathcal{B}(\Xi_{\ell_2})$-measurable. Set
\[
 {\rm Cov}_\mu (f;g) = \mu(fg) - \mu(f)\mu(g),
\]
and let $\delta$ denote the path distance on the underlying graph.
\begin{theorem}
 \label{2tm}
 Let $\pi$ and $K$ be as in Theorem \ref{1tm}, and $\mathcal{M}(\pi, h)$ be nonempty and hence contain a single state $\mu$.
 Let also $f$ and $g$ be as just described and $\|\cdot \|_\infty$ denote the sup-norm on $X$.
 Then there exist positive $C_K$ and $\alpha_K$, dependent on $K$ only, such that
 \begin{equation}
 \label{dc}
| {\rm Cov}_\mu (f;g)| \leq C_K \|f\|_\infty \|g\|_\infty \exp\left[- \alpha_K \delta(\ell_1 , \ell_2) \right].
 \end{equation}
\end{theorem}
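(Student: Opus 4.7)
My plan is to mirror the coupling argument that underlies Theorem~\ref{1tm}, but instead of coupling two elements of $\mathcal{M}(\pi,h)$, I would couple two copies of the unique state $\mu$ after conditioning them on distinct values of $x_{\ell_2}$, and then track how the resulting localised disagreement propagates through the graph toward $\ell_1$. Each Dobrushin--Pechersky coupling step contracts by a factor related to $\bar{\kappa}<1$ on the good set $X_\ell(h,K)$, so a disagreement seeded at $\ell_2$ must traverse at least $\delta(\ell_1,\ell_2)$ adjacencies in order to affect $\ell_1$; this is the geometric source of the exponential factor in (\ref{dc}).

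\textbf{Reduction to a conditional total-variation bound.} Using that $f$ is $\mathcal{B}(\Xi_{\ell_1})$-measurable and $g$ is $\mathcal{B}(\Xi_{\ell_2})$-measurable, I would disintegrate $\mu$ against its $\ell_2$-marginal $\mu_{\ell_2}$ and write
\begin{equation*}
\mathrm{Cov}_\mu(f;g) = \int_{\Xi}\int_{\Xi} g(\xi)\bigl[\mu(f\mid x_{\ell_2}=\xi) - \mu(f\mid x_{\ell_2}=\xi')\bigr]\,\mu_{\ell_2}(d\xi)\,\mu_{\ell_2}(d\xi'),
\end{equation*}
then bound the bracket by $2\|f\|_\infty\,d(\mu^\xi_{\ell_1},\mu^{\xi'}_{\ell_1})$, where $\mu^{\xi}_{\ell_1}$ denotes the $\Xi_{\ell_1}$-marginal of the conditional measure $\mu(\cdot\mid x_{\ell_2}=\xi)$. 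This reduces the theorem to an exponential bound on $d(\mu^\xi_{\ell_1},\mu^{\xi'}_{\ell_1})$ valid for $(\xi,\xi')$ of full $\mu_{\ell_2}\otimes\mu_{\ell_2}$-measure.

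\textbf{Coupling localised at $\ell_2$.} I would invoke the maximal coupling specification $\varpi$ provided by Proposition~\ref{TVpn} and construct $\nu\in\mathcal{P}(X^2)$ consistent with $\varpi$ at every $\ell\ne\ell_2$ and supported on $\{x_{\ell_2}=\xi,\ \tilde x_{\ell_2}=\xi'\}$. Existence and the $h$-integrability of $\nu$ should come from the finite-volume approximation and limiting procedure already used for Theorem~\ref{1tm}, while the analogue of Proposition~\ref{1pn} read on $\{\ell_2\}^c$ identifies the marginals of $\nu$ with $\mu(\cdot\mid x_{\ell_2}=\xi)$ and $\mu(\cdot\mid x_{\ell_2}=\xi')$. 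Applying Lemma~\ref{R3lm} to $\nu$ vertex by vertex along the chromatic decomposition (\ref{3}), the probability under $\nu$ that $x_\ell\ne\tilde x_\ell$ decays geometrically in the graph distance from $\ell$ to $\ell_2$; specialising $\ell=\ell_1$ and packaging the estimate through Lemma~\ref{dclm} gives $d(\mu^\xi_{\ell_1},\mu^{\xi'}_{\ell_1})\le C_K\exp[-\alpha_K\delta(\ell_1,\ell_2)]$, which combined with the previous step yields (\ref{dc}).

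\textbf{Main obstacle.} The delicate point will be the same one overcome in the proof of Theorem~\ref{1tm}: the local contraction (\ref{10}) only holds on $X_\ell(h,K)$, so the contribution of configurations with $h(x_{\ell'})>K$ for some $\ell'\in\partial\ell$ must be absorbed. These bad events are controlled through the growth bound (\ref{12}) and the $h$-integrability (\ref{14}), which, combined with Markov's inequality, produce a tail contribution decaying like a power of $1/K$. The threshold $K_*$ in (\ref{K}) is precisely tuned so that this bad contribution is dominated by the geometric gain $\bar{\kappa}<1$ from the good set, and tracking this trade-off through the iteration, while also accounting for the branching factor $\varDelta^\chi$ of the chromatic sweeps, is what will determine the explicit values of $C_K$ and $\alpha_K$.
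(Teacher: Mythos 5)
Your overall strategy---condition so as to localise the disagreement, couple the two conditioned measures, and let the Dobrushin--Pechersky reconstruction contract the disagreement probability down to $\ell_1$---is the same as the paper's. The paper conditions instead on the exterior $\sigma$-field $\mathcal{F}_{{\sf D}_{N-1}}$ of the ball of radius $N-1=\delta(\ell_1,\ell_2)-1$ around $\ell_1$ (so that $g$ is $\mathcal{F}_{{\sf D}_{N-1}}$-measurable), writes ${\rm Cov}_\mu(f;g)=\int_X g(x)\Phi(x)\mu(dx)$ with $\Phi(x)=\int_{X^2}(f(y)-f(z))\mu^x(dy)\mu(dz)$, and couples $\mu^x=\mu(\cdot|\mathcal{F}_{{\sf D}_{N-1}})(x)$ with $\mu$ itself, starting from $\nu_0^x=\mu^x\otimes\mu$. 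Your choice of conditioning on the single spin $x_{\ell_2}$ is a legitimate variant, and your covariance identity is correct.

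The genuine gap is the assertion that applying Lemma \ref{R3lm} ``vertex by vertex along the chromatic decomposition'' makes $\nu(x_\ell\neq\tilde x_\ell)$ decay in the distance from $\ell$ to $\ell_2$. Lemma \ref{R3lm} contracts only the global suprema $\gamma(\nu)=\sup_\ell\nu(I_\ell)$ and $\lambda(\nu)=\sup_{\ell,\ell'}\nu(I_\ell H^i_{\ell'})$ and carries no spatial information; for your couplings it is vacuous, because the disagreement at $\ell_2$ is frozen ($R_{\ell_2}$ is never applied, so $\nu_n(I_{\ell_2})=1$ for all $n$ and hence $\gamma(\nu_n)\equiv 1$), and moreover $\lambda(\nu_0)$ involves $\sup_{\ell'}\mu^{\xi}(h(x_{\ell'}))$ over all of ${\sf L}$, which (\ref{14}) does not make finite, let alone uniformly in $\xi$. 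The exponential factor actually comes from the domain-restricted quantities (\ref{dc2}): one sweeps only the shrinking balls ${\sf D}_{N-s-1}$ around $\ell_1$, because the post-sweep bound on $\gamma_{\sf D}$ needs the pre-sweep values on ${\sf D}\cup\partial{\sf D}$, so control propagates inward exactly one layer per sweep, and $N-1$ full sweeps, each contributing the matrix $M(K)$, can be performed before reaching $\{\ell_1\}$. That is precisely Lemma \ref{dclm}, which you invoke only as ``packaging'' and which, as stated, concerns the paper's couplings of $\mu^x$ with $\mu$; you would need to restate and reprove it for couplings of the two $\ell_2$-conditioned measures (it would go through, since both are consistent with $\pi$ on $\{\ell_2\}^c$). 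Finally, the resulting bound is not uniform in $(\xi,\xi')$---it retains $\lambda_{{\sf D}_{N-1}}$ of the initial product coupling---so it must be kept $\xi$-dependent and integrated against $\mu_{\ell_2}\otimes\mu_{\ell_2}$ at the end, as the paper does by inserting (\ref{dc9}) into (\ref{dc6}), rather than asserted as a pointwise bound with a single constant $C_K$ valid for almost every $(\xi,\xi')$.
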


\subsection{Comments and applications}

 \label{222}
 Let us make some comments to the above results. For further comments related to the proof of these results see the end of Section \ref{3SEC}.

\begin{itemize}
 \item According to \cite[Section 8]{Preston}, the elements of $\mathcal{M}(\pi)$ as in Definition \ref{1df}
 are one-site Gibbs states. In \cite[Theorem 1.33, page 23]{Ge} and \cite[Section 8]{Preston}, there are given conditions under which
the elements of $\mathcal{M}(\pi)$ are `usual' Gibbs states, e.g.,
in the sense of \cite[Definition 1.23, page 16]{Ge}. This, in
particular, holds if $\pi$ is a subset of the  set of all local
kernels $\Pi_{\sf D}$ defined for all finite ${\sf D}\subset {\sf
L}$, which determine the states. In this case, Theorem \ref{1tm}
yields the existence and  uniqueness of the usual states, see
\cite{Diana}.
\item The condition in (\ref{14}) is usually satisfied for {\it tempered} measures, i.e., for those elements of $\mathcal{M}(\pi)$
which are supported on {\it tempered} configurations, cf., e.g., \cite{Lebow}.
\item As mentioned above, we do not require that $h$ be {\it compact} in the sense of \cite{DobP}. This our extension gets important if one deals with
single-spin spaces which are not locally compact, e.g., with spaces
of H\"older continuous functions as in \cite{Mon,KP,Pasurek}.
\item In contrast to \cite[Theorem 1]{DobP}, in (\ref{K}) we give an explicit expression for the threshold value $K_*$,
 which  depends only on the parameters of the underlying graph and on the norms $\bar{c}$ and $\bar{\kappa}$.
\item The novelty of Theorem \ref{2tm} consists in the following. The decay of correlations under the uniqueness condition was
proven only for compact single-spin spaces, see \cite{Ku}, where the
classical Dobrushin criterion can be applied. For `unbounded spins',
the corresponding results are usually obtained by cluster
expansions, see, e.g., \cite{PS}, where the  correlations  are shown
to decay due to `weak enough' interactions' and no information on
the number of states is available.
\item The parameters $C_K$ and $\alpha_K$ in (\ref{dc}) are also given explicitly, see (\ref{constan}) below.

\end{itemize}

Now we turn to briefly outlining possible applications of Theorems
\ref{1tm} and \ref{2tm}. A more detailed discussion of this issue
can be found in \cite{Diana}, see also the related parts of
\cite{Pasurek}. Further results in these directions will be
published in forthcoming articles.

By means of Theorems \ref{1tm} and \ref{2tm} the uniqueness of
equilibrium states and the decay of correlations can be established
in the following models:
\begin{itemize}
  \item Systems of classical $N$-dimensional anharmonic oscillators described by the
  energy functional
\[
H(x) = \sum_{\ell \in {\sf L}} V(\xi_\ell) + \sum_{(\ell, \ell')\in
{\sf E}} W_{\ell \ell'} (\xi_\ell, \xi_{\ell'}), \qquad \xi_\ell \in
\mathbb{R}^N, \ N \in \mathbb{N}
\]
\item Systems of quantum $N$-dimensional anharmonic oscillators described by the
Hamiltonian
\[
H = \sum_{\ell \in {\sf L}} H_\ell + \sum_{(\ell, \ell')\in {\sf E}}
W_{\ell \ell'} (q_\ell, q_{\ell'}),
\]
where $q_\ell= (q^{(1)}_\ell, \dots , q^{(N)}_\ell)$ is the position
operator and $H_\ell$ is the one-particle Hamiltonian defined on the
corresponding physical Hilbert space. States of such models are
constructed in a path integral approach as probability measures on
the products of continuous periodic functions, which are not locally
compact, see \cite{Mon,KP,Pasurek}.
\item Systems of interacting particles in the continuum (e.g.
$\mathbb{R}^d$), including the Lebowitz-Mazel-Presutti model
\cite{LMP}, and systems of `particles' lying on the cone of discrete
measures introduced in \cite{Hagedorn}. Note that to continuum
systems the original version \cite{DobP} of the Dobrushin-Pechersky
criterion was used in \cite{BP,Pechersky}.

\end{itemize}

\section{The Proof of Theorems \ref{1tm} and \ref{2tm}}
\label{3SEC}

\subsection{The ingredients of the proof}

First we introduce the notion of {\it locality}. By writing ${\sf D}\Subset {\sf L}$ we mean that $\sf D$ is
a nonempty finite subset  of $\sf L$.
For such $\sf D$, elements of $\mathcal{B}(\Xi^{\sf D})\subset \mathcal{B}(X)$ are called local sets.
A function $f:X \to \mathbb{R}$ is called local if it is $\mathcal{B}(\Xi^{\sf D})$-measurable for some ${\sf D}\Subset {\sf L}$.
Likewise, $B\in \mathcal{B}(X^2)$ is local if $B\in\mathcal{B}((\Xi\times \Xi)^{\sf D})$ for such ${\sf D}$. Locality of functions
$f:X^2 \to \mathbb{R}$ is defined in the same way.
\begin{lemma}
 \label{1lm}
Given a one-site specification $\pi$  and $\mu_1, \mu_2 \in \mathcal{M}(\pi)$, suppose there exists $\nu_*\in \mathcal{C}(\mu_1, \mu_2)$
such that
\begin{equation}
 \label{15}
 \int_{X^2} \upsilon (x_\ell , y_\ell) \nu_* (d x, dy) = 0,
\end{equation}
holding for all $\ell \in {\sf L}$. Then $\mu_1 = \mu_2$.
\end{lemma}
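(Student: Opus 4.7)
The plan is to show that $\nu_*$ is concentrated on the diagonal $\Delta := \{(x,x) : x \in X\} \subset X^2$, and then deduce $\mu_1 = \mu_2$ by comparing marginals on a generating $\pi$-system. First I would rewrite the hypothesis (\ref{15}): since $\upsilon(\cdot,\cdot)$ is the indicator of the inequality set, (\ref{15}) is exactly the statement
\[
\nu_*\bigl(\{(x,y) \in X^2 : x_\ell \neq y_\ell\}\bigr) = 0, \qquad \ell \in {\sf L}.
\]
Because ${\sf L}$ is countable and each set $\{x_\ell = y_\ell\}$ is Borel in $X^2$ (the single-site diagonal is measurable thanks to the standard Borel assumption on $\Xi$), countable subadditivity gives $\nu_*(\Delta) = 1$.

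Next I would use this to compare the two marginals on local sets. For any ${\sf D}\Subset{\sf L}$ and any $A \in \mathcal{B}(\Xi^{\sf D})$, the indicator $\mathbb{I}_A$ depends only on the coordinates indexed by ${\sf D}$, so $\mathbb{I}_A(x) = \mathbb{I}_A(y)$ on $\Delta$. Consequently,
\[
\mu_1(A) = \nu_*(A\times X) = \int_{X^2} \mathbb{I}_A(x) \, \nu_*(dx,dy) = \int_{X^2} \mathbb{I}_A(y) \, \nu_*(dx,dy) = \nu_*(X\times A) = \mu_2(A).
\]
Since the algebra of local sets $\bigcup_{{\sf D}\Subset{\sf L}} \mathcal{B}(\Xi^{\sf D})$ is a $\pi$-system generating $\mathcal{B}(X) = \mathcal{B}(\Xi^{\sf L})$, a standard Dynkin (monotone class) argument upgrades this equality to $\mu_1 = \mu_2$ on all of $\mathcal{B}(X)$.

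There is no genuinely hard step here; in fact the consistency assumption $\mu_i \in \mathcal{M}(\pi)$ is not used at all, and the statement is really a general fact that a coupling concentrated on the diagonal forces the two marginals to coincide. The only point worth flagging is the measurability of the diagonal $\Delta$ in $X^2$, which is precisely what the standard Borel standing hypothesis secures, and which justifies passing from the per-site null sets to the single global null set $\Delta^c$.
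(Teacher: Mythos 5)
Your proof is correct and follows essentially the same route as the paper: both reduce the claim to the equality $\mu_1(A)=\mu_2(A)$ for local $A$ and then invoke that local sets determine the measure. The only cosmetic difference is that you pass through the global diagonal $\nu_*(\Delta)=1$ (using countability of ${\sf L}$), whereas the paper works locally via the pointwise bound $|\mathbb{I}_A(x)-\mathbb{I}_A(y)|\le\sum_{\ell\in{\sf D}}\upsilon(x_\ell,y_\ell)$, which needs only the finitely many sites in ${\sf D}$; both are fine.
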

\begin{proof}
Local sets $A\subset X$ are measure defining, that is, $\mu_1 , \mu_2 \in \mathcal{P}(X)$ coincide if they coincide on local sets.
For $A\in \mathcal{B}(\Xi^{\sf D})$ and the indicator $\mathbb{I}_A$, we have
\[
|\mathbb{I}_A (x) - \mathbb{I}_A (y)| \leq \sum_{\ell \in {\sf D}} \upsilon (x_\ell, y_\ell),
\]
and then
\[
|\mu_1(A) - \mu_2 (A) | \leq \sum_{\ell \in {\sf D}} \int_{X^2} \upsilon (x_\ell, y_\ell) \nu_* (d x, dy) = 0,
\]
which yields the proof.
\end{proof}
The proof of Theorem \ref{1tm} will be done by showing that, for
each $\mu_1 , \mu_2 \in \mathcal{M}(\pi,h)$, the set
$\mathcal{C}(\mu_1 , \mu_2)$ contains a certain $\nu_*$ such that
(\ref{15}) holds. This coupling $\nu_*$ will be obtained by taking
the limit in the topology of local setwise convergence, cf.
\cite{Ge}, which we introduce as follows.
\begin{definition}
 \label{3df}
A net $\{\nu_\alpha\}_{\alpha \in I} \subset \mathcal{P}(X^2)$ is said to be convergent to a $\nu_*\in \mathcal{P}(X^2)$ in the topology
of local setwise convergence ($\mathfrak{L}$-topology, for short), if
$\nu_\alpha (B) \to \nu_* (B)$ for all local $B\in \mathcal{B}(X^2)$. Or, equivalently, $\nu_\alpha (f) \to \nu_*(f)$ for all bounded local functions.
The same definition applies also to nets $\{\mu_\alpha\}_{\alpha \in I} \subset \mathcal{P}(X)$.
\end{definition}
Note that the $\mathfrak{L}$-topology is Hausdorff, but not metrizable if $\Xi$ is not a compact topological space.
\begin{lemma}
 \label{2lm}
Given $\mu_1 , \mu_2 \in \mathcal{P}(X)$, let $\{\nu_\alpha \}_{\alpha \in I} \subset \mathcal{C}(\mu_1 , \mu_2)$ be convergent
to a certain $\nu\in \mathcal{P}(X^2)$ in the $\mathfrak{L}$-topology. Then $\nu\in \mathcal{C}(\mu_1 , \mu_2)$.
\end{lemma}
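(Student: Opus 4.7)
\medskip

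The plan is to show that the two marginals of $\nu$ coincide with $\mu_1$ and $\mu_2$ respectively by first checking the equality on local sets of $X$ and then extending to all Borel sets via a standard $\pi$-$\lambda$ argument.

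First I would fix ${\sf D}\Subset {\sf L}$ and $A\in \mathcal{B}(\Xi^{\sf D})$ and look at the set $A\times X\subset X^2$. Since $A$ depends only on the coordinates $x_\ell$ with $\ell\in{\sf D}$, the set $A\times X$ is measurable with respect to the sub-$\sigma$-field generated by the pairs $(x_\ell,y_\ell)$, $\ell\in{\sf D}$; in other words, $A\times X\in\mathcal{B}((\Xi\times\Xi)^{\sf D})$, hence it is local in $X^2$ in the sense preceding Lemma \ref{1lm}. By Definition \ref{3df} the convergence $\nu_\alpha\to\nu$ in the $\mathfrak{L}$-topology then yields
\begin{equation*}
\nu(A\times X)=\lim_{\alpha}\nu_\alpha(A\times X)=\lim_{\alpha}\mu_1(A)=\mu_1(A),
\end{equation*}
where in the middle equality I use that every $\nu_\alpha$ is a coupling of $\mu_1,\mu_2$. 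The analogous computation with $X\times A$ gives $\nu(X\times A)=\mu_2(A)$.

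Next I would upgrade this from local $A$ to arbitrary $A\in\mathcal{B}(X)$. The class of local subsets of $X$ is a $\pi$-system (the intersection of an element of $\mathcal{B}(\Xi^{{\sf D}_1})$ and an element of $\mathcal{B}(\Xi^{{\sf D}_2})$ lies in $\mathcal{B}(\Xi^{{\sf D}_1\cup{\sf D}_2})$) and it generates the product $\sigma$-field $\mathcal{B}(X)=\mathcal{B}(\Xi^{\sf L})$. The set-functions $A\mapsto\nu(A\times X)$ and $A\mapsto\mu_1(A)$ are probability measures on $\mathcal{B}(X)$ that agree on this generating $\pi$-system, so by Dynkin's $\pi$-$\lambda$ theorem they agree on all of $\mathcal{B}(X)$. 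The same conclusion holds for the second marginal, and therefore $\nu\in\mathcal{C}(\mu_1,\mu_2)$.

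There is no substantial obstacle; the only point that requires a moment of care is the verification that $A\times X$ is genuinely local in $X^2$ when $A$ is local in $X$, so that the $\mathfrak{L}$-topology definition is applicable. The rest is the standard passage from a generating $\pi$-system to the full $\sigma$-field.
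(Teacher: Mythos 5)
Your proof is correct. The paper itself omits the argument, stating only that ``the proof of this lemma is rather obvious''; your route --- passing to the limit on sets of the form $A\times X$ and $X\times A$ with $A$ local, after checking these are local in $X^2$, and then extending to all of $\mathcal{B}(X)$ by the $\pi$--$\lambda$ theorem --- is precisely the standard argument the authors have in mind, with the one genuinely non-trivial point (locality of $A\times X$ under the identification $X^2\cong(\Xi\times\Xi)^{\sf L}$) correctly addressed.
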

The proof of this lemma is rather obvious.  The coupling in question $\nu_*$ will be constructed within a step-by-step procedure
based on the mapping
\begin{equation}
\label{16}
(R_\ell \nu)(f) = \int_{X^2} \left( \int_{\Xi^2} f( \xi\times x_{\{\ell\}^c}, \eta\times y_{\{\ell\}^c })
\varrho^{x,y}_\ell (d \xi, d\eta)\right) \nu( d x, d y),
\end{equation}
where $\ell \in {\sf L}$,  $\varrho_\ell^{x,y}$ is as in (\ref{7}), and $f:X^2 \to \mathbb{R}$ is a function such that
both $\nu(f)$ and the integral on the right-hand side of (\ref{16}) exist.
\begin{lemma}
 \label{Rlm}
For each $\ell  \in {\sf L}$, the mapping (\ref{16}) has the following properties: (a)
if $\nu\in \mathcal{C}(\mu_1, \mu_2)$ for some $\mu_1, \mu_2\in \mathcal{M}(\pi)$, then
also $R_\ell \nu \in \mathcal{C}(\mu_1, \mu_2)$; (b) if $f$ is $\mathcal{F}_\ell (X^2)$-measurable and $\nu$-integrable,
then $(R_\ell \nu)(f) = \nu(f)$.
\end{lemma}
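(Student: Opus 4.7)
The plan is to verify both claims by direct calculation from the defining formula (\ref{16}), relying on three ingredients: the measurability established in Proposition \ref{TVpn}(a), the coupling property $\varrho^{x,y}_\ell \in \mathcal{C}(\pi_\ell^x, \pi_\ell^y)$, and the consistency equation (\ref{5}) for $\mu_1, \mu_2 \in \mathcal{M}(\pi)$. A preliminary step is to check that $R_\ell \nu$ is a genuine element of $\mathcal{P}(X^2)$: applying (\ref{16}) to $f = \mathbb{I}_B$ with $B\in\mathcal{B}(X^2)$, the inner integral yields a bounded $\mathcal{B}(X^2)$-measurable function of $(x,y)$ by Proposition \ref{TVpn}(a) together with a monotone class argument, so the outer integration against $\nu$ is well-defined, and $\sigma$-additivity and total mass $1$ are inherited from $\nu$ and from the fact that $\varrho^{x,y}_\ell$ is a probability measure.

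For part (a), I would take an arbitrary $A \in \mathcal{B}(X)$ and compute the first marginal by choosing $f(x,y) = \mathbb{I}_A(x)$ in (\ref{16}). Since $\varrho^{x,y}_\ell$ has first marginal $\pi^x_\ell$, the inner integral over $\Xi^2$ collapses to $\int_\Xi \mathbb{I}_A(\xi \times x_{\{\ell\}^c}) \pi^x_\ell(d\xi)$, which no longer depends on $y$. Using that $\nu$ has first marginal $\mu_1$, the outer integration reduces to the right-hand side of (\ref{5}) written for $\mu_1$, and consistency $\mu_1 \in \mathcal{M}(\pi)$ gives $\mu_1(A)$. The analogous computation with $f(x,y)=\mathbb{I}_A(y)$ and the second marginal of $\varrho^{x,y}_\ell$ yields $\mu_2(A)$, establishing $R_\ell \nu \in \mathcal{C}(\mu_1, \mu_2)$.

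For part (b), the crucial observation is that an $\mathcal{F}_\ell(X^2)$-measurable $f$ does not depend on the $\ell$-th coordinates of either of its arguments. Hence $f(\xi \times x_{\{\ell\}^c}, \eta \times y_{\{\ell\}^c}) = f(x,y)$ for all $(\xi,\eta) \in \Xi^2$, and the inner integral in (\ref{16}) equals $f(x,y) \cdot \varrho^{x,y}_\ell(\Xi^2) = f(x,y)$. Integrating against $\nu$ gives $(R_\ell \nu)(f) = \nu(f)$.

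I do not anticipate a real obstacle here; the whole proof is bookkeeping. The only subtlety worth checking carefully is the measurability of the kernel $(x,y) \mapsto \int_{\Xi^2} f(\xi \times x_{\{\ell\}^c}, \eta \times y_{\{\ell\}^c}) \varrho^{x,y}_\ell(d\xi, d\eta)$ for general bounded measurable $f$ on $X^2$, but this is handled by the standard monotone class argument once Proposition \ref{TVpn}(a) supplies it for product indicators.
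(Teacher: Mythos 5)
Your proposal is correct and follows exactly the argument the paper gives (which is stated there in two terse sentences): claim (a) via the marginal property of $\varrho^{x,y}_\ell$ combined with the consistency equation (\ref{5}), and claim (b) via the independence of $f$ from the $\ell$-th coordinates together with $\varrho^{x,y}_\ell(\Xi^2)=1$. You merely spell out the bookkeeping, including the measurability of the kernel, which the paper leaves implicit.
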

\begin{proof}
Claim (a) is true since $\varrho_\ell^{x,y} \in \mathcal{C}(\pi_\ell^x, \pi_\ell^y)$ for all $x, y \in X$. Claim
(b) follows by the fact that the considered $f$ in (\ref{16}) is independent of $\xi$ and $\eta$, and that $\varrho_\ell^{x,y}$
is a probability measure.
\end{proof}
Given $\ell \in {\sf L}$, we set
\begin{equation*}
Y_\ell = \{(x^1, x^2)\in X^2: \upsilon (x_\ell^1, x_\ell^2) \leq \sum_{\ell'\in \partial \ell} \upsilon (x_{\ell'}^1, x_{\ell'}^2) \}.
\end{equation*}
\begin{lemma}
 \label{R1lm}
For each $\nu\in \mathcal{P}(X^2)$ and  $\ell \in {\sf L}$, it follows that $(R_\ell \nu)(Y_\ell) =1$.
\end{lemma}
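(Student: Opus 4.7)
The plan is to establish the equivalent statement $(R_\ell\nu)(Y_\ell^c)=0$. Unpacking the definition, $(x^1,x^2)\in Y_\ell^c$ exactly when $x^1_\ell\ne x^2_\ell$ while $x^1_{\ell'}=x^2_{\ell'}$ for every $\ell'\in\partial\ell$. Hence, applying (\ref{16}) with $f=\mathbb{I}_{Y_\ell^c}$ and noting that $R_\ell$ leaves the coordinates outside $\ell$ untouched, the neighbor-agreement indicator factors out of the inner $\varrho^{x,y}_\ell$-integral, and only the $\ell$-coordinate term $\upsilon(\xi,\eta)$ remains inside:
\begin{equation*}
(R_\ell\nu)(Y_\ell^c)=\int_{X^2}\mathbb{I}[x_{\partial\ell}=y_{\partial\ell}]\left(\int_{\Xi^2}\upsilon(\xi,\eta)\,\varrho^{x,y}_\ell(d\xi,d\eta)\right)\nu(dx,dy).
\end{equation*}

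By the optimality identity (\ref{7}), the inner integral equals $d(\pi^x_\ell,\pi^y_\ell)$, so the task reduces to checking that this quantity vanishes $\nu$-a.e.\ on the agreement set $\{x_{\partial\ell}=y_{\partial\ell}\}$. Here I would invoke (\ref{10}) together with the observation that the set $X_\ell(h,K)$ in (\ref{11}) is determined purely by $x_{\partial\ell}$. Consequently, on $\{x_{\partial\ell}=y_{\partial\ell}\}$ the two configurations $x,y$ are simultaneously in $X_\ell(h,K)$, and (\ref{10}) then yields
\begin{equation*}
d(\pi^x_\ell,\pi^y_\ell)\le\sum_{\ell'\in\partial\ell}\kappa_{\ell\ell'}\,\upsilon(x_{\ell'},y_{\ell'})=0,
\end{equation*}
so $(R_\ell\nu)(Y_\ell^c)=0$ and the lemma follows.

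The step I expect to need the most care is conceptual rather than calculational: recognizing that the indicator of $Y_\ell^c$ decouples cleanly into an event depending on $(x,y)$ only through the $\partial\ell$-coordinates and an event depending only on the resampled pair $(\xi,\eta)$, which allows the outer indicator to be pulled past the $\varrho^{x,y}_\ell$-integral. Once this factorization is in place, the remainder is a direct application of the optimal-coupling identity (\ref{7}) and the Dobrushin--Pechersky bound (\ref{10}); the latter is exactly the quantitative Markov-type property saying that $\pi^x_\ell=\pi^y_\ell$ whenever $x$ and $y$ agree on $\partial\ell$ inside $X_\ell(h,K)$, which is all the proof requires.
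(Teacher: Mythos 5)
Your factorization of $\mathbb{I}_{Y_\ell^c}$ into the neighbor-agreement indicator times $\upsilon(\xi,\eta)$, and the reduction via (\ref{7}) to showing that $d(\pi_\ell^x,\pi_\ell^y)$ vanishes on $\{x_{\partial\ell}=y_{\partial\ell}\}$, is correct and is exactly the paper's route. The gap is in the last step. You assert that agreement on $\partial\ell$ places both configurations in $X_\ell(h,K)$; but membership in $X_\ell(h,K)$ requires $h(x_{\ell'})\leq K$ for all $\ell'\in\partial\ell$, so agreement on $\partial\ell$ only guarantees that $x$ and $y$ are \emph{simultaneously in or simultaneously out} of that set. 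When $h(x_{\ell'})>K$ for some neighbor --- an event that $\nu$ may well charge, since $h$ is unbounded and this is precisely the regime the Dobrushin--Pechersky criterion is designed for --- the hypothesis (\ref{10}) simply does not apply, and your chain of inequalities gives no control of $d(\pi_\ell^x,\pi_\ell^y)$. Nothing in (\ref{10})--(\ref{12}) bounds this distance off $X_\ell(h,K)$.

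The paper closes the step differently: for $(x^1,x^2)\in Y_\ell^c$ one has $x^1_{\ell'}=x^2_{\ell'}$ for all $\ell'\in\partial\ell$, and the definition of the specification is invoked to conclude $\pi_\ell^{x^1}=\pi_\ell^{x^2}$ outright, whence $d(\pi_\ell^{x^1},\pi_\ell^{x^2})=0$ with no restriction to $X_\ell(h,K)$. In other words, the lemma rests on the (unconditional) Markov property of the one-site kernels relative to the graph --- $\pi_\ell^x$ depends on $x$ only through $x_{\partial\ell}$ --- which is a structural feature of $\pi$ and is what makes $Y_\ell$ the right set in the first place; it is not a consequence of the quantitative estimate (\ref{10}). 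Your closing remark that the Markov-type property holding \emph{inside} $X_\ell(h,K)$ ``is all the proof requires'' is exactly where the argument is insufficient: the identity $\pi_\ell^x=\pi_\ell^y$ is needed for all pairs agreeing on $\partial\ell$, not only for the tempered ones.
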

\begin{proof}
If $(x^1, x^2)$ is in $Y_\ell^c$, then  $\upsilon (x_\ell^1,
x_\ell^2) =1$ and $\upsilon (x_{\ell'}^1, x_{\ell'}^2)=0$ for all
$\ell'\in \partial \ell$, which follows by the fact that $\upsilon$
takes values in $\{0,1\}$. This means that $x_\ell^1 \neq x_\ell^2$
and $x_{\ell'}^1 = x_{\ell'}^2$ for all $\ell'\in \partial \ell$.
For such $(x^1, x^2)$, the definition of $\pi$ implies that
$\pi_\ell^{x^1} = \pi_\ell^{x^2}$, and hence
\[
\int_{\Xi^2} \upsilon (\xi, \eta) \varrho^{x^1,x^2}_\ell (d\xi,
d\eta) = d(\pi^{x_1}_\ell, \pi^{x_2}_\ell) = 0,
\]
which by (\ref{16}) yields $(R_\ell \nu)(Y_\ell^c) = 0$.
\end{proof}
The proof of Theorem \ref{1tm} will be done by showing that, for each $\mu_1, \mu_2 \in \mathcal{M}(\pi, h)$, there exists
$\nu_* \in \mathcal{C}(\mu_1, \mu_2)$, for wich (\ref{15}) holds.
To this end we construct a sequence $\{\hat{\nu}_n\}_{n\in \mathbb{N}_0} \subset
\mathcal{C}(\mu_1, \mu_2)$ such that
\begin{equation}
\label{18}
\gamma(\hat{\nu}_n):= \sup_{\ell \in {\sf L}} \int_{X^2} \upsilon (x^1_\ell , x_\ell^2) \hat{\nu}_n (d x^1 , d x^2) \to 0, \qquad n \to +\infty.
\end{equation}
This sequence will be obtained by a procedure based on the mapping (\ref{16}) and the estimates which we derive in the next subsection.
The proof of Theorem \ref{2tm} will be obtained as a byproduct.

\subsection{The main estimates}

In the sequel, we use the following functions indexed by $\ell \in {\sf L}$
\begin{equation}
 \label{19}
I_\ell (x^1, x^2) =  \upsilon (x^1_\ell , x_\ell^2), \qquad H^i_\ell (x^1, x^2) = h(x_\ell^i), \ \  i = 1,2.
\end{equation}
By claim (b) of Lemma \ref{Rlm}, we have that
\begin{equation*}
(R_\ell \nu)(I_{\ell_1}) = \nu (I_{\ell_1}), \quad (R_\ell \nu)(I_{\ell_1}H^i_{\ell_2}) =
\nu(I_{\ell_1}H^i_{\ell_2}) \quad {\rm for} \ \ell \neq \ell_1, \ \ell \neq \ell_2,
\end{equation*}
whenever $H^i_\ell$ is $\nu$-integrable.
We recall that $\varrho_\ell^{x,y}$ in (\ref{16}) is a coupling of $\pi_\ell^x$ and $\pi_\ell^y$, for which (\ref{10})
and (\ref{12}) hold true.
\begin{lemma}
 \label{R2lm}
Let $\nu \in \mathcal{P}(X^2)$ be such that the integrals on both sides of (\ref{16}) exist for $f=H^i_\ell$, $\ell \in {\sf L}$ and $i=1,2$.
Then the following estimates hold
\begin{gather}
 \label{21}
 (R_\ell \nu)(I_{\ell}) \leq \sum_{\ell' \in\partial \ell} \kappa_{\ell \ell'} \nu(I_{\ell'}) +
K^{-1} \sum_{i=1,2} \sum_{\ell_1 , \ell_2 \in \partial \ell} \nu(I_{\ell_2}H^i_{\ell_1}), \\[.2cm]
\label{22}
(R_\ell \nu)(I_{\ell_1}H^i_{\ell}) \leq \nu (I_{\ell_1}) + \sum_{\ell_2 \in \partial \ell}c_{\ell\ell_2}
\nu(I_{\ell_1}H^i_{\ell_2}),\\[.2cm]
\label{23}
(R_\ell \nu)(I_{\ell}H^i_{\ell_1}) \leq \sum_{\ell_2 \in \partial \ell} \nu(I_{\ell_2}H^i_{\ell_1}), \qquad \ell_1 \neq \ell, \\[.3cm]
\label{24}
(R_\ell \nu)(I_{\ell}H^i_{\ell}) \leq  \sum_{\ell_1 \in \partial \ell} \nu(I_{\ell_1}) +
\sum_{\ell_1 , \ell_2 \in \partial \ell}c_{\ell \ell_2} \nu(I_{\ell_1}H^i_{\ell_2}).
\end{gather}
\end{lemma}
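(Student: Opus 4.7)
The plan is to unwind the definition (\ref{16}) for each of the four quantities and perform the inner integration against $\varrho^{x^1,x^2}_\ell$ explicitly, then apply the pointwise hypotheses (\ref{10}) and (\ref{12}) before integrating against $\nu$. Throughout, the two ingredients are the marginal property
\begin{equation*}
\int g(\xi)\,\varrho^{x^1,x^2}_\ell(d\xi, d\eta) = \pi^{x^1}_\ell(g)
\end{equation*}
(and its symmetric analogue for $\eta$) together with the identity $\int \upsilon(\xi, \eta)\,\varrho^{x^1,x^2}_\ell = d(\pi^{x^1}_\ell, \pi^{x^2}_\ell)$ from (\ref{7}).

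Estimate (\ref{22}) is the easiest. Since $\ell_1 \neq \ell$, the factor $I_{\ell_1}$ is independent of $(\xi, \eta)$ and passes outside the inner integration, leaving $\int h(\xi)\,\varrho^{x^1,x^2}_\ell = \pi^{x^1}_\ell(h) \leq 1 + \sum_{\ell_2 \in \partial \ell} c_{\ell \ell_2} h(x^1_{\ell_2})$ by (\ref{12}); integration against $\nu$ yields the bound, and the case $i = 2$ is symmetric. For (\ref{23}), the inner integration produces $h(x^i_{\ell_1})\,d(\pi^{x^1}_\ell, \pi^{x^2}_\ell)$, and I use the pointwise estimate
\begin{equation*}
d(\pi^{x^1}_\ell, \pi^{x^2}_\ell) \leq \sum_{\ell_2 \in \partial \ell} \upsilon(x^1_{\ell_2}, x^2_{\ell_2}),
\end{equation*}
which holds by a trivial dichotomy: either $x^1$ and $x^2$ coincide on $\partial \ell$, in which case the $\partial\ell$-Markov character of $\pi$ inherent in (\ref{10}) forces $\pi^{x^1}_\ell=\pi^{x^2}_\ell$ and both sides vanish; or at least one neighbor disagrees and the right-hand side is already $\geq 1 \geq d$. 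Integration then gives (\ref{23}).

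For (\ref{24}) with $i = 1$, the inner integral is $\int \upsilon(\xi, \eta) h(\xi)\,\varrho^{x^1,x^2}_\ell$. On the event $E := \{x^1_{\ell'} = x^2_{\ell'}$ for all $\ell' \in \partial \ell\}$, the optimal coupling concentrates on the diagonal and $\upsilon$ vanishes $\varrho$-a.s., killing the integral; on $E^c$ I bound $\upsilon \leq 1$ to obtain $\pi^{x^1}_\ell(h)$. Hence the inner integral is at most
\begin{equation*}
\mathbb{I}_{E^c}(x^1, x^2)\,\pi^{x^1}_\ell(h) \leq \sum_{\ell_1 \in \partial \ell} \upsilon(x^1_{\ell_1}, x^2_{\ell_1}) \bigl[1 + \sum_{\ell_2 \in \partial \ell} c_{\ell \ell_2} h(x^1_{\ell_2})\bigr],
\end{equation*}
and integrating against $\nu$ produces (\ref{24}); the case $i = 2$ uses the second marginal symmetrically.

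The most delicate estimate is (\ref{21}), which must simultaneously carry the Dobrushin contraction on the good region and a $K^{-1}$-correction on the tail. I split $X^2$ into $A := X_\ell(h,K) \times X_\ell(h,K)$ and $A^c$. On $A$, hypothesis (\ref{10}) directly gives $d(\pi^{x^1}_\ell, \pi^{x^2}_\ell) \leq \sum_{\ell' \in \partial \ell} \kappa_{\ell \ell'} \upsilon(x^1_{\ell'}, x^2_{\ell'})$. On $A^c$, either the two configurations coincide on $\partial \ell$ and $d = 0$ by the Markov property invoked above, or there is some $\ell^*_2 \in \partial \ell$ with $\upsilon(x^1_{\ell^*_2}, x^2_{\ell^*_2}) = 1$ together with (by definition of $A^c$) some $\ell^*_1 \in \partial \ell$ and $i^* \in \{1,2\}$ with $h(x^{i^*}_{\ell^*_1}) > K$, whence the single term $K^{-1}\upsilon(x^1_{\ell^*_2}, x^2_{\ell^*_2})\,h(x^{i^*}_{\ell^*_1})$ already exceeds $1 \geq d$. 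Summing over $(i, \ell_1, \ell_2) \in \{1,2\} \times (\partial \ell)^2$ and integrating yields (\ref{21}). The key subtlety, and the main obstacle in the proof, is to retain the $\upsilon(x^1_{\ell_2}, x^2_{\ell_2})$-factor inside the correction term rather than extracting only a Markov estimator $\mathbb{I}_{A^c}$: this is what produces the product $\nu(I_{\ell_2} H^i_{\ell_1})$ on the right-hand side and is precisely the structure needed for the contraction iteration driving the proof of Theorem \ref{1tm}.
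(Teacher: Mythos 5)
Your proof is correct and follows essentially the same route as the paper's: the same splitting by the indicator of $X_\ell(h,K)\times X_\ell(h,K)$ for (\ref{21}), the same direct use of (\ref{12}) for (\ref{22}) and (\ref{24}), and the same ``supported on the diagonal unless a neighbour disagrees'' fact (the paper's Lemma \ref{R1lm}) for (\ref{23}) and for the correction terms, with only cosmetic differences (your pointwise witness bound on $A^c$ versus the paper's detour through (\ref{23})). One small caveat: the Markov property $\pi_\ell^{x^1}=\pi_\ell^{x^2}$ whenever $x^1=x^2$ on $\partial\ell$ does not literally follow from (\ref{10}), which is only assumed on $X_\ell(h,K)$; the paper extracts it from the definition of the specification itself, and your argument needs it in exactly the same places.
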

\begin{proof}
The proof of (\ref{23}) readily follows by Lemma \ref{R1lm}. Let us prove
(\ref{21}). By (\ref{7}) and (\ref{16}), we have
\begin{eqnarray*}
(R_\ell \nu)(I_{\ell}) & = & \int_{X^2} d(\pi_\ell^{x^1}, \pi_\ell^{x^2}) \nu(dx^1 , dx^2)\\[.2cm]
& = & \int_{X^2} \mathbf{1}_\ell (x^1)\mathbf{1}_\ell (x^2)  d(\pi_\ell^{x^1}, \pi_\ell^{x^2}) \nu(dx^1 , dx^2)\nonumber \\[.2cm]
& + & \int_{X^2} \left[ 1 - \mathbf{1}_\ell (x^1)\mathbf{1}_\ell (x^2) \right]  d(\pi_\ell^{x^1}, \pi_\ell^{x^2}) \nu(dx^1 , dx^2) ,\nonumber
\end{eqnarray*}
where $\mathbf{1}_\ell$ is the indicator of the set defined in (\ref{11}). By (\ref{10}), we have
\[
\int_{X^2} \mathbf{1}_\ell (x^1)\mathbf{1}_\ell (x^2)  d(\pi_\ell^{x^1}, \pi_\ell^{x^2}) \nu(dx^1 , dx^2) \leq
\sum_{\ell' \in\partial \ell} \kappa_{\ell \ell'} \nu(I_{\ell'}),
\]
which yields the first term of the right-hand side of (\ref{21}). By (\ref{11}), we have
\[
\left[ 1 - \mathbf{1}_\ell (x^1)\mathbf{1}_\ell (x^2) \right]  \leq \sum_{i=1,2} \sum_{\ell_1 \in \partial \ell}
\left[1 - \mathbb{I}_{h\leq K} (x_{\ell_1}^i)\right],
\]
where $\mathbb{I}_{h\leq K}$ is the indicator of $\{\xi \in \Xi: h(\xi) \leq K\}$. Then the second term
of the right-hand side of (\ref{21}) cannot exceed the following
\begin{eqnarray*}
& & \sum_{i=1,2} \sum_{\ell_1 \in \partial \ell}  \int_{X^2}
\left[1 - \mathbb{I}_{h\leq K} (x_{\ell_1}^i)\right] d(\pi_\ell^{x^1}, \pi_\ell^{x^2}) \nu(dx^1 , dx^2)\\[.2cm]
& &  \qquad \leq K^{-1} \sum_{i=1,2} \sum_{\ell_1 \in \partial \ell}  \int_{X^2} h(x^i_{\ell_1})
 d(\pi_\ell^{x^1}, \pi_\ell^{x^2}) \nu(dx^1 , dx^2)\\[.2cm]
& & \qquad \leq  K^{-1} \sum_{i=1,2} \sum_{\ell_1 , \ell_2\in \partial \ell} \nu(I_{\ell_2} H^i_{\ell_1}).
\end{eqnarray*}
The latter line has been obtained by (\ref{23}).

Let us prove now (\ref{22}). By (\ref{16}) and the fact that
$\varrho_\ell^{x,y} \in \mathcal{C}(\pi_\ell^x, \pi_\ell^y)$, we have
\begin{eqnarray*}
 (R_\ell \nu)(I_{\ell_1}H^i_{\ell}) & = & \int_{X^2} \left( \int_{\Xi} h (\xi) \pi^{x^i} (d\xi) \right)
\upsilon(x^{1}_{\ell_1} , x^{2}_{\ell_1}) \nu(dx^1, dx^2)\\[.2cm]
& \leq & {\rm RHS}(\ref{22}),
\end{eqnarray*}
where we have used (\ref{12}). To prove (\ref{24}) we employ Lemma \ref{R1lm}, by which we get
\[
{\rm LHS}(\ref{24}) \leq \sum_{\ell_1 \in \partial \ell} (R_\ell \nu) (I_{\ell_1} H^i_\ell) \leq {\rm RHS}(\ref{24}) ,
\]
where the latter estimate follows by (\ref{22}).
\end{proof}
From the lemma just proven it follows  that along with the parameter $\gamma(\nu)$ defined in (\ref{18}) one has to control
also the following
\begin{equation}
 \label{26}
\lambda (\nu) = \max_{i=1,2} \sup_{\ell, \ell' \in {\sf L}} \nu(I_\ell H^i_{\ell'}),
\end{equation}
where $\nu\in \mathcal{C}(\mu_1 , \mu_2)$, $\mu_1, \mu_2 \in \mathcal{M}_h(\pi)$, and $\pi \in \Pi(h,K,\kappa, c)$, see Definition \ref{2df}.

\subsection{The proof of Theorem \ref{1tm}}

The proof is based on constructing a sequence with the property (\ref{18}). Given $\mu_1, \mu_2 \in \mathcal{M}(\pi,h)$ with
$\pi \in \Pi(h,K,\kappa, c)$, we take an arbitrary $\nu_0\in \mathcal{C}(\mu_1, \mu_2 )$ and construct $\nu \in \mathcal{C}(\mu_1, \mu_2 )$ by applying
the mapping defined in (\ref{16}) to the initial $\nu_0$ with $\ell$ running over the set ${\sf L}$.
Each time we use the estimates derived in Lemma \ref{R2lm}. Then the first two elements of the sequence in question are set $\hat{\nu}_0 = \nu_0$ and
$\hat{\nu}_1 = \nu$. Afterwards, we produce $\hat{\nu}_2$ from $\hat{\nu}_1$, etc.

Recall that the underlying graph is supposed to have the property defined in (\ref{1}) and
$\chi\leq \varDelta$
is its chromatic number. Set
\begin{equation}
 \label{26a}
 A = \frac{2 \varDelta^{\chi +1}}{1 - \bar{\kappa}}.
\end{equation}
Then, for $K>K_*$, see (\ref{K}), the following holds
\begin{equation}
 \label{26b}
K^{-1} < \frac{\bar{c}(1 - \bar{\kappa})}{4 \varDelta^{\chi+1}}, \qquad A K^{-1} < \bar{c}/2.
\end{equation}
\begin{lemma}
 \label{R3lm}
For $K> K_*$, take $\pi \in \Pi(h,K,\kappa, c)$  and $\mu_1 , \mu_2
\in \mathcal{M}(\pi, h)$. Then   for each $\nu_0 \in
\mathcal{C}(\mu_1 , \mu_2)$ there exists $\nu \in \mathcal{C}(\mu_1
, \mu_2)$ for which the following estimates hold
\begin{gather}
 \label{27}
 \gamma(\nu) \leq \left[\bar{\kappa} + AK^{-1} \right] \gamma(\nu_0) + 2 A K^{-1} \lambda (\nu_0),\\[.3cm]
 \label{28}
 \lambda (\nu) \leq \Delta^{\chi-1} \gamma(\nu_0) + \bar{c} \varDelta^\chi  \lambda(\nu_0).
\end{gather}
\end{lemma}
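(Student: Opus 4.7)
The plan is to construct $\nu$ as the result of a full graph sweep of single-site updates (\ref{16}), organized via the chromatic decomposition (\ref{3}). Since each ${\sf V}_j$ is independent, the kernels $\varrho_\ell^{x,y}$ at different vertices $\ell \in {\sf V}_j$ depend only on spins at neighbors of $\ell$, all of which lie outside ${\sf V}_j$. Hence one can introduce a simultaneous-update map $R_{{\sf V}_j}: \mathcal{P}(X^2) \to \mathcal{P}(X^2)$ that, at the sites of ${\sf V}_j$, integrates against the product kernel $\bigotimes_{\ell \in {\sf V}_j} \varrho_\ell^{x,y}$, while keeping all other coordinates unchanged. Measurability of the product kernel follows from Proposition \ref{TVpn}(a) and an Ionescu--Tulcea extension from finite cylinders of ${\sf V}_j$, and a vertex-by-vertex argument as in Lemma \ref{Rlm}(a) shows that $R_{{\sf V}_j}$ preserves $\mathcal{C}(\mu_1, \mu_2)$. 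Setting $\nu^{(0)} := \nu_0$ and $\nu^{(j)} := R_{{\sf V}_{j-1}} \nu^{(j-1)}$ for $j = 1, \dots, \chi$, I take $\nu := \nu^{(\chi)}$.

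The estimates (\ref{27})--(\ref{28}) then follow by propagating Lemma \ref{R2lm} through the $\chi$ phases. The key book-keeping observation is that for every $\ell \in {\sf V}_k$ the $\ell$-coordinate is touched only at phase $k+1$ (Lemma \ref{Rlm}(b) applied site by site), so $\nu(I_\ell) = \nu^{(k+1)}(I_\ell)$, and, for any pair $(\ell, \ell')$ with colors $k \leq m$, $\nu(I_\ell H^i_{\ell'}) = \nu^{(m+1)}(I_\ell H^i_{\ell'})$. Moreover, at phase $j$ the neighbors of any $\ell \in {\sf V}_{j-1}$ all lie outside ${\sf V}_{j-1}$, so the single-site inequalities (\ref{21})--(\ref{24}) apply verbatim to the pre-phase measure $\nu^{(j-1)}$. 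Splitting according to the relative position of $(\ell, \ell')$ with respect to the active color class gives single-phase recursions of the form $\gamma(\nu^{(j)}) \leq \bar{\kappa}\,\gamma(\nu^{(j-1)}) + 2K^{-1}\varDelta^2 \lambda(\nu^{(j-1)})$ (from (\ref{21})) and $\lambda(\nu^{(j)}) \leq \varDelta\,\gamma(\nu^{(j-1)}) + \bar{c}\,\varDelta\,\lambda(\nu^{(j-1)})$, with similar-or-smaller contributions from Cases B, C, E, the Case~D bound from (\ref{24}) being the dominant worst case for $\lambda$. An induction along these lines shows $\gamma(\nu^{(j)}) \leq \gamma(\nu_0)$ for $j < \chi$; iterating the $\lambda$-recursion $\chi$ times and using $\bar{c}^\chi \leq \bar{c}$ together with $\chi \leq \varDelta$ yields (\ref{28}), while substituting the resulting bound on $\lambda(\nu^{(k_\ell)})$ back into (\ref{21}) at the final step and invoking the definition (\ref{26a}) of $A$ together with the threshold (\ref{K}) reproduces (\ref{27}) with the stated coefficients $\bar{\kappa} + AK^{-1}$ and $2AK^{-1}$.

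The main obstacle I anticipate is the coupled iteration: the $\gamma$- and $\lambda$-recursions mix at every phase, the dominant case within (\ref{21})--(\ref{24}) can shift depending on the relative sizes of $\bar{\kappa}$, $\bar{c}\,\varDelta$ and $2K^{-1}\varDelta^2$, and the factor $\varDelta^{\chi-1}$ in (\ref{28}) emerges from a somewhat subtle counting in which the $\varDelta$-expansion from (\ref{23}) can occur at most $\chi-1$ times before being capped by a $\bar{c}$-factor from (\ref{22}) or (\ref{24}). A cleaner bookkeeping is obtained by packaging the iteration as a $2\times 2$ matrix inequality $\bigl(\gamma(\nu^{(j)}),\lambda(\nu^{(j)})\bigr)^T \leq M\bigl(\gamma(\nu^{(j-1)}),\lambda(\nu^{(j-1)})\bigr)^T$ and computing $M^\chi$ entrywise, after which the prefactors match (\ref{27})--(\ref{28}) by elementary algebra. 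A secondary technical point is the measurability of the possibly infinite product kernel $\bigotimes_{\ell \in {\sf V}_j}\varrho_\ell^{x,y}$, handled by Kolmogorov--Ionescu--Tulcea extension from the finite cylinders provided by Proposition \ref{TVpn}.
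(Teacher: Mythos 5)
Your overall architecture --- a sweep over the colour classes ${\sf V}_0,\dots,{\sf V}_{\chi-1}$, applying the single-site estimates of Lemma \ref{R2lm} once per class --- is the same as the paper's, and the simultaneous-update variant $R_{{\sf V}_j}$ (versus the paper's sequential composition $R_{\ell_n}\cdots R_{\ell_1}$ followed by an $\mathfrak{L}$-limit via Lemma \ref{2lm}) is an acceptable cosmetic difference. The gap is in the quantitative bookkeeping. The per-phase recursion you propose for the \emph{global} quantities, in particular $\lambda(\nu^{(j)})\le\varDelta\,\gamma(\nu^{(j-1)})+\bar{c}\,\varDelta\,\lambda(\nu^{(j-1)})$ with (\ref{24}) as ``the dominant worst case'', is not a valid upper bound. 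When the active site is $\ell$ but the $H$-coordinate sits at a different site $\ell'$, the relevant inequality is (\ref{23}), which spreads $I_\ell$ over $\partial\ell$ with a bare factor $\varDelta$ and \emph{no} factor $\bar{c}$; the small factor $\bar{c}$ appears only in the single phase in which the $H$-site itself is updated, via (\ref{22}) or (\ref{24}). A colour-blind recursion must therefore use the coefficient $\varDelta$ (not $\bar{c}\varDelta$) for the $\lambda\to\lambda$ entry, and iterating that $\chi$ times gives $\varDelta^{\chi}\lambda(\nu_0)$, which is useless, while your $(\bar{c}\varDelta)^{\chi}$ underestimates the true growth. Moreover the global recursion fails for a second reason: sites (and pairs of sites) not yet updated retain their original values, so neither $\gamma(\nu^{(j)})$ nor $\lambda(\nu^{(j)})$ can contract before the sweep is complete --- you concede this for $\gamma$ but then ignore it for $\lambda$. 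Consequently ``package the iteration as a $2\times 2$ matrix inequality and compute $M^{\chi}$ entrywise'' cannot reproduce the coefficients in (\ref{27})--(\ref{28}).

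What is actually needed --- and what the paper does in (\ref{41})--(\ref{42}) --- is a position-dependent induction hypothesis: after the $j$-th phase one keeps \emph{separate} bounds on $\nu_j(I_\ell)$ and $\nu_j(I_\ell H^{i}_{\ell'})$ according to whether $\ell,\ell'$ lie in the already-updated region ${\sf U}_{j-1}$ or in ${\sf W}_{j-1}$ (four cases for the $\lambda$-type quantities, two for the $\gamma$-type). In that refined scheme the $\gamma$-bound on updated sites equals $[\bar{\kappa}+AK^{-1}]\gamma(\nu_0)+2AK^{-1}\lambda(\nu_0)$ from the first phase on and is merely shown to be preserved, while the bound on fully updated pairs grows like $\varDelta^{j}\gamma(\nu_0)+\bar{c}\,\varDelta^{j+1}\lambda(\nu_0)$ --- exactly $j$ bare factors of $\varDelta$ from (\ref{23}) and one $\bar{c}$ from (\ref{22})/(\ref{24}). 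Only at $j=\chi-1$, when ${\sf W}_{\chi-1}=\emptyset$, do the first-line cases cover all sites and yield (\ref{27})--(\ref{28}). Your closing remark about ``the $\varDelta$-expansion from (\ref{23}) occurring at most $\chi-1$ times before being capped by a $\bar{c}$-factor'' is precisely the right intuition; the matrix shortcut you then propose as the ``cleaner bookkeeping'' discards it.
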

The proof of the lemma will be given in the subsequent parts of the paper.
\vskip.1cm \noindent
{\it Proof Theorem \ref{1tm}:}
As already mentioned, we let $\hat{\nu}_1\in \mathcal{C}(\mu_1, \mu_2)$ and
$\hat{\nu}_0\in \mathcal{C}(\mu_1, \mu_2)$ be the measures on
the left-hand sides and right-hand sides of (\ref{27}) and (\ref{28}), respectively. Then we
 apply to $\hat{\nu}_1$ the same reconstruction procedure and obtain $\hat{\nu}_2 \in \mathcal{C}(\mu_1, \mu_2)$, for which both estimates
(\ref{27}), (\ref{28}) hold with $\hat{\nu}_1$ on the right-hand sides. We repeat this due times and obtain
$\hat{\nu}_n \in \mathcal{C}(\mu_1, \mu_2)$
such that
\begin{equation}
 \label{28a}
\left(\begin{array}{ll} \gamma(\hat{\nu}_n)\\[.3cm] \lambda (\hat{\nu}_n) \end{array}
                         \right)  \leq\left[ M(K) \right]^n \left(\begin{array}{ll} \gamma(\nu_0)\\[.3cm] \lambda (\nu_0) \end{array}
                         \right),
\end{equation}
where $M(K)$ is the matrix defined by the right-hand sides of (\ref{27}) and (\ref{28}).
Its spectral radius is
\begin{equation}
 \label{srM}
r_K = \frac{1}{2}\left[\bar{\kappa} + A K^{-1} + \bar{c}\varDelta^\chi + \sqrt{(\bar{\kappa} + A K^{-1} - \bar{c}\varDelta^\chi)^2 +
8 \varDelta^{\chi} AK^{-1}} \right].
\end{equation}
For $K>K_*$, see (\ref{K}),
we have $r_K < 1$, which by (\ref{28a}) yields (\ref{18}) and thereby completes the proof.

\subsection{The proof of Theorem \ref{2tm}}

The proof of this theorem is based on the version of the estimates in Lemma \ref{R3lm}
obtained in a subset ${\sf D}\subset {\sf L}$. For such $\sf D$, we define
\begin{equation*}
\partial {\sf D} = \{\ell' \in {\sf D}^c: \partial \ell' \cap {\sf D}\neq \emptyset\},
\end{equation*}
which is the external boundary of ${\sf D}$. For $\nu \in \mathcal{P}(X^2)$
such that all $H^{i}_\ell$, $i=1,2$, $\ell \in {\sf D}\cup \partial {\sf D}$ are
$\nu$-integrable,  see (\ref{19}), we set, cf. (\ref{18}) and (\ref{26}),
\begin{equation}
 \label{dc2}
\gamma_{\sf D} (\nu) = \sup_{\ell \in {\sf D}} \nu(I_\ell), \qquad \lambda_{\sf D}(\nu) = \max_{i=1,2}\sup_{\ell_1, \ell_2 \in {\sf D}}
\nu(I_{\ell_1} H^{i}_{\ell_2}).
\end{equation}
Next, for $\ell_1$ as in (\ref{dc}) and $N= \delta (\ell_1 , \ell_2)$, we set
\begin{equation*}
{\sf D}_0 = \{\ell_1\}, \quad {\sf D}_{k} = {\sf D}_{k-1} \cup \partial {\sf D}_{k-1}, \quad k=1, \dots , N-1.
\end{equation*}
Let $\mu^x(\cdot)$ denote the conditional measure $\mu
(\cdot|\mathcal{F}_{{\sf D}_{N-1}})(x)$. For brevity, we say that
$\nu^x \in \mathcal{P}(X^2)$ is $\mathcal{F}_{{\sf
D}_{N-1}}$-measurable if the maps $x\mapsto \nu^x(B)$ are
$\mathcal{F}_{{\sf D}_{N-1}}$-measurable for all $B\in
\mathcal{B}(X^2)$. Clearly, $\nu_0^x = \mu^x \otimes \mu$ possesses
this property. The version of Lemma \ref{R3lm} which we need is the
following statement.
\begin{lemma}
 \label{dclm}
Let $\pi$, $K$, and $\mu$  be as in Theorem \ref{2tm} and $\nu_0^x =
\mu^x \otimes \mu$. Then there exist $\nu_1^x, \dots , \nu_{N-1}^x
\in \mathcal{C}(\mu^x, \mu)$, all $\mathcal{F}_{{\sf
D}_{N-1}}$-measurable, such that for the parameters defined in
(\ref{dc2}) the following estimates hold
\begin{equation}
 \label{dc3}
\left(\begin{array}{ll} \gamma_{{\sf D}_{N-s-1}}(\nu^x_{s})\\[.3cm] \lambda_{{\sf D}_{N-s-1}} (\nu^x_s) \end{array}
                         \right)  \leq M(K)  \left(\begin{array}{ll} \gamma_{{\sf D}_{N-s}}(\nu^x_{s-1})\\[.3cm]
                         \lambda_{{\sf D}_{N-s}} (\nu^x_{s-1}) \end{array}
                         \right),
\end{equation}\
for all $s=1, \dots , N-1$ and $\mu$-almost all $x\in X$.
 \end{lemma}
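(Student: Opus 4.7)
The plan is to adapt the iterative reconstruction proof of Lemma \ref{R3lm} to a localized setting, applying the operator $R_\ell$ only to vertices inside progressively smaller balls around $\ell_1$. I proceed in three main steps.

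First, verify that the starting coupling $\nu_0^x = \mu^x \otimes \mu$ has the properties needed to run the reconstruction on ${\sf D}_{N-1}$. Since $\mu \in \mathcal{M}(\pi,h)$, the tower property of conditional expectations gives $\mu^x(\cdot|\mathcal{F}_\ell)(y) = \pi_\ell^y$ for every $\ell \in {\sf D}_{N-1}$ and $\mu^x$-a.e.\ $y$; that is, $\mu^x$ is consistent with $\pi$ throughout ${\sf D}_{N-1}$. Regularity of the conditional probability also ensures that $x \mapsto \mu^x(A)$ is $\mathcal{F}_{{\sf D}_{N-1}}$-measurable, so $\nu_0^x \in \mathcal{C}(\mu^x,\mu)$ is $\mathcal{F}_{{\sf D}_{N-1}}$-measurable. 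Integrability of $h$ under $\mu$, together with $\mu(h) < \infty$ from (\ref{14}), yields boundedness of the quantities $\nu_0^x(H_\ell^i)$ entering (\ref{dc2}).

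Next, given $\nu_{s-1}^x$, construct $\nu_s^x$ by the same chromatic reconstruction used in the proof of Lemma \ref{R3lm}, but restricted to ${\sf D}_{N-s-1}$. Concretely, one applies $R_\ell$ for all $\ell \in {\sf V}_j \cap {\sf D}_{N-s-1}$ successively over $j = 0, \dots, \chi-1$. The set ${\sf D}_{N-s-1}$ is finite by (\ref{1}), so only finitely many operators $R_\ell$ are composed; measurability of $x \mapsto \nu_s^x(B)$ with respect to $\mathcal{F}_{{\sf D}_{N-1}}$ is preserved via Proposition \ref{TVpn} together with the integration defining (\ref{16}), and claim (a) of Lemma \ref{Rlm} gives $\nu_s^x \in \mathcal{C}(\mu^x,\mu)$.

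The decisive observation for the estimates is that, for $\ell \in {\sf D}_{N-s-1}$, the right-hand sides of (\ref{21})--(\ref{24}) involve only $\nu$-expectations at vertices in $\{\ell\} \cup \partial \ell \subseteq {\sf D}_{N-s}$. Running the chromatic iteration and aggregating bounds as in the proof of Lemma \ref{R3lm}, with the only change being that the supremum on the left ranges over ${\sf D}_{N-s-1}$ and the bounding supremum on the right over ${\sf D}_{N-s}$, reproduces precisely the matrix $M(K)$ of (\ref{27})--(\ref{28}); the combinatorial factors $\varDelta^{\chi-1}$ and $\varDelta^\chi$ arise from iterating over $\chi$ colour classes with at most $\varDelta$ neighbours each and are insensitive to whether the procedure is carried out on ${\sf L}$ or on a finite ball. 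This delivers (\ref{dc3}).

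The main obstacle is the bookkeeping in the chromatic iteration: one must check that, as the $R_\ell$ are applied colour class by colour class inside ${\sf D}_{N-s-1}$, the bounds from Lemma \ref{R2lm} accumulate into exactly the entries of $M(K)$, and that the localization is self-contained in the sense that the only escape of the supports from ${\sf D}_{N-s-1}$ is the one-layer enlargement into $\partial {\sf D}_{N-s-1} \subset {\sf D}_{N-s}$ anticipated by the shrinking schedule. Once this is settled, measurability, the coupling property and the $h$-integrability inherited from $\mu$ are straightforward.
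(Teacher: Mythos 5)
Your proposal is correct and follows essentially the same route as the paper: the authors likewise run the chromatic reconstruction of Lemma \ref{R3lm} restricted to ${\sf D}_{N-s-1}$ (applying $R_\ell$ colour class by colour class over ${\sf V}_j\cap{\sf D}_{N-s-1}$), rely on the fact that for $\ell\in{\sf D}_{N-s-1}$ the estimates (\ref{21})--(\ref{24}) only involve vertices in $\partial\ell\subseteq{\sf D}_{N-s}$, and obtain measurability and the coupling property from Proposition \ref{TVpn} and Lemma \ref{Rlm}. Your explicit check that $\mu^x$ is consistent with $\pi$ on ${\sf D}_{N-1}$ via the tower property is a point the paper leaves implicit, but it is the right justification.
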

\vskip.1cm \noindent
{\it Proof of Theorem \ref{2tm}:}  Since  $g$ is $\mathcal{F}_{{\sf D}_{N-1}}$-measurable,
we have
\begin{equation*}
\int_{X} f(x) g(x) \mu(dx) = \int_{X} g(x)\left(\int_X f(y) \mu^x(d y) \right)\mu(dx),
\end{equation*}
which yields
\begin{equation}
  \label{dc6}
 {\rm Cov}_\mu (f;g) = \int_X g(x) \Phi(x)\mu(dx),
\end{equation}
where
\begin{eqnarray}
  \label{dc7}
 \Phi (x)  =  \int_{X^2} \left( f(y)-f(z)\right)\mu^x (dy) \mu(dz).
\end{eqnarray}
For each $\nu^x_s$, $s=0, \dots , N-1$, as in Lemma \ref{dclm}, we then have
\begin{equation}
  \label{dc8}
 \Phi(x) =  \int_{X^2} \left( f(y)-f(z)\right) \nu^x_s(d y, dz),
\end{equation}
and hence
\begin{equation}
  \label{dc9}
  \left\vert \Phi(x) \right\vert \leq 2 \|f\|_{\infty} \nu^x_{N-1}(I_{\ell_1}) = 2 \|f\|_{\infty} \gamma_{{\sf D}_0} (\nu^x_{N-1}).
\end{equation}
Note that the function defined in (\ref{dc7}), (\ref{dc8}) is related to the quantity which characterizes mixing in
state $\mu$, cf. \cite[Proposition 2.5]{Ku}.

Let $v_s$ and $v_{s-1}$ denote the column vector on the left-hand and right-hand sides of (\ref{dc3}), respectively. Set
\begin{equation*}
 \xi = \frac{\varDelta^{\chi-1}}{r_K - \bar{c}\varDelta^\chi} = \frac{r_K - \bar{\kappa} - AK^{-1}}{2 AK^{-1}} >0,
\end{equation*}
and let $T$ be the $2\times 2$ diagonal matrix with $T_{11} = \xi$ and $T_{22} =1$.  Then the matrix
\begin{equation}
 \label{dc11}
 \widetilde{M}(K) := T M(K) T^{-1},
\end{equation}
cf. \cite[Corollary 2.9.4, page 102]{BL},
is positive and such that both its rows sum up  to $r_K$. Set $\tilde{v}_s = T v_s$ and let $\tilde{v}_s^{i}$, $i=1,2$,
be the entries of $\tilde{v}_s$. By (\ref{dc3}) we then get
\[
 \|\tilde{v}_s\|:= \max\{\tilde{v}_s^{1}; \tilde{v}_s^{2}\} \leq \|\widetilde{M}(K)\| \|\tilde{v}_{s-1}\| = r_K
 \max\{\tilde{v}_{s-1}^{1}; \tilde{v}_{s-1}^{2}\},
\]
which yields
\begin{equation}
 \label{dc12}
 \gamma_{{\sf D}_0} (\nu^x_{N-1}) \leq r_K^{N-1} \max\{ \gamma_{{\sf D}_{N-1}}(\nu^x_0); \xi^{-1} \lambda_{{\sf D}_{N-1}}(\nu^x_0)\}.
\end{equation}
Applying this estimate in (\ref{dc9}) and then in (\ref{dc6}) we arrive at (\ref{dc}) with,
cf. (\ref{srM}) and (\ref{14}),
\begin{equation}
  \label{constan}
\alpha_K = - \log r_K, \qquad  C_K = 2 r_K^{-1} \max\{1; \xi^{-1} \mu(h)\}.
\end{equation}
Let us make now  further comments on the above results and their proof.
\begin{itemize}
  \item
The mapping  in (\ref{16}), which is the main reconstruction tool, see Section \ref{4SEC} below, was first introduced in another seminal paper by R. L. Dobrushin \cite{Dob}.
In a rather general context, it was used in \cite{dl}. The main feature of this mapping, which was not pointed out in \cite{DobP}, is the
measurability of the coupling $\varrho_\ell^{x,y}$ in $(x,y)\in X^2$. A similar property of the couplings in Lemma \ref{dclm} was crucial
for the proof of Theorem \ref{2tm}.
\item We avoid using `compactness' of $h$, and hence the related topological properties of the single-spin space $\Xi$, by employing the $\mathfrak{L}$-topology, see Definition \ref{3df}.
\item In contrast to the estimates obtained in \cite[Lemma 5]{DobP}, our estimate in (\ref{28}) is independent of $K$. The only constant in (\ref{27}) is given explicitly in (\ref{26a}). This allowed us to calculate explicitly the spectral radius (\ref{srM}), which was then used to obtain the decay parameter $\alpha_K$, see (\ref{constan}).
\item The proof of Lemma \ref{dclm} was performed in the spirit of the proof of Proposition 2.5 of \cite{Ku}. Our $\Phi(x)$ in (\ref{dc7}), (\ref{dc8}) can be used to prove a kind of mixing in state $\mu$. However, here we cannot estimate this function uniformly in $x$, and hence   employ its measurable estimate (\ref{dc9}) which is then integrated in (\ref{dc6}).
\item The transformation used in (\ref{dc11}) allowed us to find explicitly the operator norm of $M(K)$ equal to its spectral radius $r_K$.  This then was used to find in (\ref{dc12}) the exact rate of the decay of correlations in $\mu$.
\end{itemize}

\section{Proof of Lemmas \ref{R3lm} and \ref{dclm}}
\label{4SEC}

For the partition (\ref{3}) of the set of vertices ${\sf L}$, which has the property (\ref{2}), we set
\begin{equation}
 \label{29}
{\sf U}_j = \bigcup_{i=0}^j {\sf V}_i, \qquad {\sf W}_j = {\sf L}\setminus {\sf U}_j, \quad  j=0, \dots , \chi-1.
\end{equation}
The measure $\nu$ in (\ref{27}), (\ref{28}) will be obtained in the course of consecutive reconstructions with $\ell\in {\sf V}_j$. The first step is
\vskip.1cm
\subsection{Reconstruction over ${\sf V}_0$}
\label{sec:1}
Let $\{\ell_1, \ell_2, \dots , \}$ be any numbering of the elements of ${\sf V}_0$. Set
\begin{equation}
 \label{30}
{\sf V}_0^{(n)} = \{\ell_1, \dots , \ell_n\}, \qquad \nu_0^{(n)} = R_{\ell_n} R_{\ell_{n-1}}\cdots R_{\ell_1} \nu_0, \quad n \in \mathbb{N}.
\end{equation}
Our first task is to estimate $\nu_0^{(n)} (I_\ell)$. By claim (b) of Lemma \ref{Rlm} we have that
\begin{equation}
 \label{31}
 \nu_0^{(n)} (I_\ell) = \nu_0 (I_\ell), \qquad {\rm for} \  \ \ell \notin  {\sf V}_0^{(n)}.
\end{equation}
For $k\leq n$, by (\ref{2}) and claim (b) of Lemma \ref{Rlm}, and then by (\ref{21}) and (\ref{31}), we have
\begin{eqnarray}
 \label{32}
\nu_0^{(n)} (I_{\ell_k}) = \nu_0^{(k)} (I_{\ell_k}) & \leq & \sum_{\ell \in \partial \ell_k} \kappa_{\ell_k\ell} \nu_0(I_{\ell}) +
K^{-1}  \sum_{i=1,2} \sum_{\ell, \ell' \in \partial \ell_k} \nu_0 (I_\ell H^i_{\ell'})\nonumber \\[.2cm]
& \leq & \bar{\kappa} \gamma(\nu_0) + 2 \varDelta^2 K^{-1} \lambda (\nu_0),
\end{eqnarray}
see also (\ref{9}), (\ref{18}), and (\ref{26}).

Next we turn to estimating $\nu_0^{(n)} (I_\ell H^i_{\ell'})$. As in (\ref{31}) we have
\begin{equation*}
 \nu_0^{(n)} (I_\ell H^i_{\ell'}) = \nu_0 (I_\ell H^i_{\ell'}) \qquad {\rm for} \  \ \ell, \ell' \notin  {\sf V}_0^{(n)}.
\end{equation*}
For $k<m \leq n$, by claim (b) of Lemma \ref{Rlm}, and then by (\ref{22}), (\ref{21}),
(\ref{32}), and (\ref{23}), we have
\begin{eqnarray*}
 & & \nu_0^{(n)} (I_{\ell_k} H^i_{\ell_{m}}) = \nu_0^{(m)} (I_{\ell_k} H^i_{\ell_{m}})
 \leq  \nu_0^{(k)}(I_{\ell_k}) + \sum_{\ell \in \partial \ell_m} c_{\ell_m \ell} \nu^{(k)}_0(I_{\ell_k}H^i_{\ell}) \nonumber \\[.2cm]
 & & \qquad \leq \bar{\kappa} \gamma(\nu_0) + 2 \varDelta^2 K^{-1} \lambda (\nu_0)  + \sum_{\ell \in \partial \ell_m} c_{\ell_m \ell}
 \sum_{\ell' \in \partial \ell_k} \nu_0 (I_{\ell'} H^i_{\ell}) \nonumber \\[.2cm]
 & & \qquad  \leq  \bar{\kappa} \gamma(\nu_0) + \left[\varDelta \bar{c} + 2 \varDelta^2  K^{-1} \right] \lambda (\nu_0).
 \end{eqnarray*}
For $k\leq n$, by (\ref{24}) we have
\begin{eqnarray*}
 \nu_0^{(n)} (I_{\ell_k} H^i_{\ell_{k}}) = \nu_0^{(k)} (I_{\ell_k} H^i_{\ell_{k}}) & \leq &
 \sum_{\ell \in \partial \ell_k}  \nu_0(I_\ell) +
 \sum_{\ell, \ell' \in \partial \ell_k} c_{\ell_k \ell'} \nu_0( I_\ell H^i_{\ell'}) \nonumber \\[.2cm]
& \leq & \varDelta \gamma(\nu_0) + \varDelta \bar{c} \lambda (\nu_0).
 \end{eqnarray*}
Next, for $m < k \leq n$, by (\ref{23}) and (\ref{22}) we have
\begin{eqnarray*}
 \nu_0^{(n)} (I_{\ell_k} H^i_{\ell_{m}})& = & \nu_0^{(k)} (I_{\ell_k} H^i_{\ell_{m}})
 \leq \sum_{\ell \in \partial \ell_k} \nu_0^{(m)} (I_\ell H^i_{\ell_m}) \nonumber \\[.2cm]
 & \leq & \sum_{\ell \in  \partial \ell_k} \left( \nu_0 (I_\ell) + \sum_{\ell' \in \partial \ell_m}
 c_{\ell_m \ell'} \nu_0 (I_{\ell} H^i_{\ell'}) \right) \nonumber \\[.2cm]
 & \leq & \varDelta \gamma(\nu_0) + \varDelta \bar{c} \lambda (\nu_0).
\end{eqnarray*}
Now we consider the case where $k\leq n$ and $\ell \notin  {\sf V}_0^{(n)}$. Then by (\ref{23}) we have
\begin{equation*}
\nu_0^{(n)} (I_{\ell_k} H^i_{\ell}) = \nu_0^{(k)} (I_{\ell_k} H^i_{\ell})
 \leq  \sum_{\ell' \in \partial \ell_k} \nu_0 (I_{\ell'} H^i_\ell) \leq \varDelta \lambda (\nu_0).
\end{equation*}
For $k\leq n$ and $\ell \notin  {\sf V}_0^{(n)}$, we also have by (\ref{22}) that
\begin{eqnarray}
  \label{38}
   \nu_0^{(n)} (I_{\ell} H^i_{\ell_{k}}) = \nu_0^{(k)} (I_{\ell} H^i_{\ell_{k}}) & \leq & \nu_0 (I_\ell) + \sum_{\ell' \in \partial \ell_k}
c_{\ell_k \ell'} \nu_0 (I_\ell H^i_{\ell'}) \nonumber \\[.2cm]
& \leq  & \gamma(\nu_0) + \bar{c} \lambda (\nu_0).
\end{eqnarray}
Now let us consider the sequence $\{\nu_0^{(n)}\}_{n\in \mathbb{N}_0}$ defined in (\ref{30}).
By claim (b) of Lemma \ref{Rlm} it stabilizes on local sets $B\in \mathcal{B}(X^2)$, and hence is convergent in the $\mathfrak{L}$-topology.
Let $\nu_1$ be its limit. By Lemma \ref{2lm} we have that $\nu_1 \in \mathcal{C}(\mu_1, \mu_2)$. At the same time, by
(\ref{29}), (\ref{31}), and (\ref{32}) it follows that
\begin{equation}
  \label{39}
\nu_1 (I_\ell) \leq \left\{\begin{array}{ll}
\bar{\kappa} \gamma(\nu_0) + 2 \varDelta^2 K^{-1} \lambda (\nu_0),&  \qquad {\rm for} \  \ \ell\in {\sf V}_0  ; \\[.3cm]
 \gamma(\nu_0), &  \qquad {\rm for} \  \ \ell\in {\sf W}_0 .
\end{array} \right.
\end{equation}
Similarly, by (\ref{32}) -- (\ref{38}) we obtain
\begin{equation}
  \label{40}
\nu_1 (I_\ell H^{i}_{\ell'}) \leq \left\{\begin{array}{ll}
\varDelta \gamma(\nu_0) + \left[ \varDelta \bar{c} + 2 \varDelta^2 K^{-1} \right] \lambda(\nu_0), &  \
\ell, \ell' \in {\sf V}_0;\\[.3cm]
\varDelta \lambda (\nu_0), &  \
\ell \in {\sf V}_0,   \ell' \in {\sf W}_0 ;\\[.3cm]
\gamma(\nu_0) + \bar{c} \lambda(\nu_0), & \  \ell\in {\sf W}_0, \ell' \in {\sf V}_0;\\[.3cm]
\lambda (\nu_0), & \
\ell, \ell' \in {\sf W}_0.
\end{array} \right.
\end{equation}
These estimates complete the reconstruction over ${\sf V}_0$.
\vskip.1cm
\subsection{Reconstruction over ${\sf V}_j$: Proof of Lemma \ref{R3lm}}
\label{sec:2}
Here we assume that $\nu_j$ satisfies the following estimates, cf. (\ref{39}), where $A$ is as in (\ref{26a}):
\begin{equation}
  \label{41}
\nu_j (I_\ell) \leq \left\{\begin{array}{ll}
\left[\bar{\kappa} + A K^{-1}\right] \gamma(\nu_0) + 2 A K^{-1} \lambda (\nu_0),&  \qquad {\rm for} \  \ \ell\in {\sf U}_{j-1}  ; \\[.3cm]
 \gamma(\nu_0), &  \qquad {\rm for} \  \ \ell\in {\sf W}_{j-1} .
\end{array} \right.
\end{equation}
And also, cf. (\ref{40}),
\begin{equation}
  \label{42}
\nu_j (I_\ell H^{i}_{\ell'}) \leq \left\{\begin{array}{ll}
 \varDelta^j
 \gamma(\nu_0) + \bar{c}\varDelta^{j+1} \lambda(\nu_0), &  \
\ell, \ell' \in {\sf U}_{j-1};\\[.3cm]
\varDelta^j \lambda (\nu_0), &  \
\ell \in {\sf U}_{j-1},   \ell' \in {\sf W}_{j-1} ;\\[.3cm]
 j \gamma(\nu_0) + \bar{c} \lambda(\nu_0), & \  \ell\in {\sf W}_{j-1}, \ell' \in {\sf V}_{j-1};\\[.3cm]
\lambda (\nu_0), & \
\ell, \ell' \in {\sf W}_{j-1}.
\end{array} \right.
\end{equation}
Since ${\sf W}_{\varDelta-1} = \emptyset$, see (\ref{29}), for $j= \varDelta -1$ we have just the first lines in
(\ref{41}) and (\ref{42}), which yields (\ref{27}) and (\ref{28}), respectively, and thus the proof of Lemma \ref{R3lm}.
Note that (\ref{39}) agrees with  (\ref{41}) as $\varDelta^2 < A$, see (\ref{26a}). Also (\ref{40}) agrees with  (\ref{42}), which follows
from the fact that
\[
\bar{c} \varDelta + 2 \varDelta^2 K^{-1} < \bar{c} \varDelta + A K^{-1} \leq \bar{c} \varDelta + \bar{c}/2 < \bar{c} \varDelta^2
\leq \bar{c} \varDelta^{j+1}, \quad j = 1, \dots \chi-1,
\]
see (\ref{26a}) and (\ref{26b}).

Thus, our aim now is to prove that the estimates as in (\ref{41}) and (\ref{42}) hold also for $j+1$.
Note that the last lines in these estimates follow by claim (b) of Lemma \ref{Rlm}.
As above, we enumerate ${\sf V}_j = \{\ell_1 , \ell_2 , \cdots \}$ and  set
\[
\nu_j^{(n)} = R_{\ell_n}R_{\ell_{n-1}} \cdots R_{\ell_1}\nu_j.
\]
For $k\leq n$, by (\ref{21}) we have, cf. (\ref{32}),
\begin{eqnarray*}
\nu_j^{(n)} (I_{\ell_k}) & = & \nu_j^{(k)} (I_{\ell_k}) \leq
\sum_{\ell \in \partial \ell_k \cap {\sf U}_{j-1}}\kappa_{\ell_k\ell} \nu_j(I_\ell)
+ \sum_{\ell \in \partial \ell_k \cap {\sf W}_{j}}\kappa_{\ell_k\ell} \nu_j(I_\ell)\qquad \nonumber \\[.2cm]
& + & K^{-1} \sum_{i=1,2} \sum_{\ell, \ell' \in \partial \ell_k \cap {\sf U}_{j-1}} \nu_j (I_\ell H^i_{\ell'}) \nonumber \\[.2cm]
& + & K^{-1} \sum_{i=1,2} \sum_{\ell \in \partial \ell_k \cap {\sf U}_{j-1}} \sum_{  \ell' \in \partial \ell_k \cap {\sf W}_{j} } \nu_j (I_\ell H^i_{\ell'}) \nonumber \\[.2cm]
& + & K^{-1} \sum_{i=1,2} \sum_{\ell \in \partial \ell_k \cap {\sf W}_{j} }\sum_{   \ell' \in \partial \ell_k \cap {\sf U}_{j-1} } \nu_j (I_\ell H^i_{\ell'}) \nonumber \\[.2cm]
& + & K^{-1} \sum_{i=1,2} \sum_{\ell, \ell' \in \partial \ell_k \cap {\sf W}_{j}} \nu_j (I_\ell H^i_{\ell'}).
\end{eqnarray*}
Now we use the assumptions in (\ref{41}) and (\ref{42}) and obtain herefrom
\begin{eqnarray}
  \label{45}
 \nu_j^{(n)} (I_{\ell_k}) & \leq & \left[\bar{\kappa} + K^{-1} \left( \bar{\kappa} A + 2 \varDelta^j  \varDelta^2_j  + 2 j \varDelta_j
 \widetilde{\varDelta}_j   \right) \right]\gamma(\nu_0) \\[.2cm]
 & + &2 K^{-1} \left[\bar{\kappa} A + \bar{c} \varDelta^{j+1} \varDelta_j^2 +  \varDelta^j \varDelta_j\widetilde{ \varDelta}_j \right. \nonumber \\[.2cm]
 & + & \left.  \bar{c} \varDelta_j\widetilde{ \varDelta}_j +  \widetilde{ \varDelta}^2_j \right]\lambda (\nu_0) ,\nonumber
\end{eqnarray}
where
\[
\varDelta_j := | \partial \ell_k \cap {\sf U}_{j-1}| , \qquad \widetilde{\varDelta}_j:= | \partial \ell_k \cap {\sf W}_{j}|.
\]
To prove that
\[
 \bar{\kappa} A +  2 \varDelta^j  \varDelta^2_j  + 2 j \varDelta_j
 \widetilde{\varDelta}_j  \leq A
\]
see the first line in (\ref{41}),
we use (\ref{26a}), take into account that $\varDelta \geq 2$ (hence, $j \leq \varDelta^j$, $j=1, 2 , \dots \chi -1$) and obtain
\[
 2 \varDelta^j  \varDelta^2_j  + 2 j \varDelta_j
 \widetilde{\varDelta}_j \leq 2 \varDelta^j  \varDelta_j\left( \varDelta_j + \widetilde{\varDelta}_j (j/\varDelta^j)\right)\leq 2 \varDelta^{j+2}
 \leq A (1 - \bar{\kappa}),
\]
where we have taken into account that $j+2 \leq \chi+1$, see (\ref{26a}).
To prove that the coefficient at $\lambda (\nu_0)$ in (\ref{45}) agrees with that in (\ref{41}) we use the following
estimates
\begin{eqnarray*}
 & & \bar{c} \varDelta^{j+1} \varDelta_j^2 +  \varDelta^j \varDelta_j\widetilde{ \varDelta}_j +
 \bar{c} \varDelta_j\widetilde{ \varDelta}_j +  \widetilde{ \varDelta}^2_j \\[.2cm]
 & & \quad = \bar{c} \varDelta^{j+1} \varDelta_j \left( \varDelta_j + \widetilde{\varDelta}_j \varDelta^{-j}\right) +
 \varDelta^{j}\widetilde{\varDelta}_j \left( \varDelta_j + \widetilde{\varDelta}_j \varDelta^{-(j+1)}\right)\\[.2cm]
& & \quad  \leq \varDelta^2 + \varDelta^{j+2}  \leq 2\varDelta^{j+2}
 \leq A (1 - \bar{\kappa}).
\end{eqnarray*}
For $\ell \in {\sf U}_{j-1}$, $\nu_j^{(n)}(I_\ell) = \nu_j (I_\ell)$ and hence obeys the first line of (\ref{41}). For
$\ell \in {\sf W}_{j}$, again $\nu_j^{(n)}(I_\ell) = \nu_j (I_\ell)$ and hence obeys the second line of (\ref{41}).
Here we also used that $\bar{c} < 1/\varDelta^\chi$ and $j+1\leq \chi$, see (\ref{13}).
Thus,  (\ref{41}) with $j+1$ holds true.

Now we turn to estimating $\nu_j^{(n)} (I_{\ell} H^i_{\ell'})$. In the situation where  $\ell, \ell' \in {\sf U}_{j-1}\cup {\sf W}_j$, we have that
$\nu_j^{(n)} (I_{\ell} H^i_{\ell'}) = \nu_j (I_{\ell} H^i_{\ell'})$ and hence obeys (\ref{42}). Let us consider first the cases where
only one vertex of $\ell, \ell'$ lies in ${\sf V}_j$.

For $\ell' \in {\sf U}_{j-1}$ and $k \leq n$, by (\ref{23}) and the first and third lines in (\ref{42}) we obtain
\begin{eqnarray*}
 & & \nu_j^{(n)} (I_{\ell_k} H^i_{\ell'}) = \nu_j^{(k)} (I_{\ell_k} H^i_{\ell'})
 \leq \sum_{\ell \in \partial \ell_k \cap {\sf U}_{j-1}}\nu_j (I_{\ell} H^i_{\ell'})
 + \sum_{\ell \in \partial \ell_k \cap {\sf W}_{j}}\nu_j (I_{\ell} H^i_{\ell'}) \nonumber \\[.2cm]
 & & \quad \leq \left[\varDelta^j  \varDelta_j + j \widetilde{ \varDelta}_j \right] \gamma(\nu_0) +
 \left[\bar{c}\varDelta^{j+1} \varDelta_j + \bar{c} \widetilde{ \varDelta}_j \right] \lambda(\nu_0)\nonumber \\[.2cm]
 & & \quad \leq \varDelta^{j+1} \gamma(\nu_0) + \bar{c} \varDelta^{j+2}\lambda(\nu_0),
 \end{eqnarray*}
which yields the first line in (\ref{42}) with $j+1$.

For $\ell' \in {\sf W}_{j}$ and $k \leq n$, by (\ref{23}) and the second and fourth lines in (\ref{42}) it follows that
\begin{gather*}
 \nu_j^{(n)} (I_{\ell_k} H^i_{\ell'}) = \nu_j^{(k)} (I_{\ell_k} H^i_{\ell'}) \leq \sum_{\ell \in \partial \ell_k \cap {\sf U}_{j-1}}\nu_j (I_{\ell} H^i_{\ell'}) + \sum_{\ell \in \partial \ell_k \cap {\sf W}_{j}}\nu_j (I_{\ell} H^i_{\ell'}) \nonumber \\[.2cm]
 \leq
 \left(\varDelta^j \varDelta_j + \widetilde{ \varDelta}_j \right) \lambda(\nu_0) \leq \varDelta^{j+1} \lambda(\nu_0),
 \end{gather*}
 which agrees with the second line in (\ref{42}).

 For $\ell \in {\sf U}_{j-1}$ and $k \leq n$, by (\ref{22}) and the first and second lines in (\ref{42}) we get
\begin{eqnarray*}
 & & \nu_j^{(n)} (I_{\ell} H^i_{\ell_k}) = \nu_j^{(k)} (I_{\ell} H^i_{\ell_k}) \leq \nu_j (I_\ell) +
 \sum_{\ell' \in \partial \ell_k \cap {\sf U}_{j-1}}c_{\ell_k\ell'}\nu_j (I_{\ell} H^i_{\ell'}) \nonumber \\[.2cm]
 & & \quad + \sum_{\ell' \in \partial \ell_k \cap {\sf W}_{j}} c_{\ell_k\ell'}\nu_j (I_{\ell} H^i_{\ell'}) \leq
 \left[ \bar{\kappa} + A K^{-1} \right] \gamma(\nu_0)  \\[.2cm]
 & & \quad + 2A K^{-1} \lambda (\nu_0) +
 \left[\varDelta^j  \gamma(\nu_0) + \bar{c} \varDelta^{j+1}  \lambda (\nu_0) \right]
 \sum_{\ell' \in \partial \ell_k \cap {\sf U}_{j-1}}c_{\ell_k\ell'} \nonumber  \\[.2cm] & & \quad +
 \varDelta^j \lambda (\nu_0) \sum_{\ell' \in \partial \ell_k \cap {\sf W}_{j}} c_{\ell_k\ell'}. \nonumber
 \end{eqnarray*}
In order for this to agree with the first line in (\ref{42}), it is enough that the following holds
\begin{eqnarray}
 \label{48a}
 & &\qquad  \bar{\kappa} + AK^{-1} +  \varDelta^{j} \sum_{\ell' \in \partial \ell_k \cap {\sf U}_{j-1}}c_{\ell_k\ell'} \leq \varDelta^{j+1}, \\[.2cm]
& &  2 AK^{-1} +  \bar{c} \varDelta^{j+1} \sum_{\ell' \in \partial \ell_k \cap {\sf U}_{j-1}}c_{\ell_k\ell'} +
\varDelta^{j} \sum_{\ell' \in \partial \ell_k \cap {\sf W}_{j}}c_{\ell_k\ell'} \leq \bar{c} \varDelta^{j+2}. \nonumber
 \end{eqnarray}
Recall that we assume $\varDelta \geq 2$. By (\ref{26b}) and (\ref{13}) we get that the left-hand side of the first line in (\ref{48a}) does not exceed
\[
 \bar{\kappa}  + \bar{c}/2 + \varDelta^{-1} < 2 < \varDelta^{j+1}, \qquad {\rm for}  \ \  j=1, \dots , \chi-1.
\]
Likewise, the left-hand side of the second line in (\ref{48a}) does not exceed
\[
 \bar{c} + \bar{c} + \bar{c} \varDelta^{j} \leq \bar{c}( 2 + \varDelta^{j} ) < \bar{c} \varDelta^{j+2}  \qquad {\rm for}  \ \  j=1, \dots , \chi-1.
\]
For $\ell \in {\sf W}_{j}$ and $k \leq n$, by (\ref{22}) and the third and fourth lines in (\ref{42}) we get
\begin{eqnarray}
\label{49}
& &  \nu_j^{(n)} (I_{\ell} H^i_{\ell_k}) = \nu_j^{(k)} (I_{\ell} H^i_{\ell_k})
 \leq \nu_j (I_\ell)  \nonumber \\[.2cm] & &  + \sum_{\ell' \in \partial \ell_k \cap {\sf U}_{j-1}}c_{\ell_k\ell'}\nu_j (I_{\ell} H^i_{\ell'})
 + \sum_{\ell' \in \partial \ell_k \cap {\sf W}_{j}} c_{\ell_k\ell'}\nu_j (I_{\ell} H^i_{\ell'}) \\[.2cm]
 & & \leq  \gamma(\nu_0) + \left[j \gamma(\nu_0) +
 \bar{c} \lambda (\nu_0)\right]  \sum_{\ell' \in \partial \ell_k \cap {\sf U}_{j-1}}c_{\ell_k\ell'}
 +  \lambda (\nu_0) \sum_{\ell' \in \partial \ell_k \cap {\sf W}_{j}} c_{\ell_k\ell'} \nonumber \\[.2cm]
 & & \leq (1 + j \bar{c}) \gamma(\nu_0) + \left( \bar{c}\sum_{\ell' \in \partial \ell_k \cap {\sf U}_{j-1}}c_{\ell_k\ell'} +
 \sum_{\ell' \in \partial \ell_k \cap {\sf W}_{j}}c_{\ell_k\ell'} \right) \lambda(\nu_0), \nonumber
 \end{eqnarray}
 which clearly agrees with the third line in (\ref{42}).

Now we consider the cases where both $\ell, \ell'$ lie in ${\sf V}_j$. For $k< m\leq n$, by first (\ref{22}) and (\ref{23}), and then by (\ref{21}),  we have
\begin{eqnarray}
  \label{50}
 \nu_j^{(n)} (I_{\ell_k} H^i_{\ell_m}) & = & \nu_j^{(m)} (I_{\ell_k} H^i_{\ell_m}) \leq  \nu_j^{(k)} (I_{\ell_k}) + \sum_{\ell'\in \partial \ell_m}
 c_{\ell_m\ell'} \nu_j^{(k)}(I_{\ell_k}H^i_{\ell'}) \nonumber \\[.2cm]
& \leq &\sum_{\ell \in \partial \ell_k} \kappa_{\ell_k\ell}\nu_j (I_\ell) +
K^{-1} \sum_{s=1,2} \sum_{\ell, \ell'\in \partial \ell_k} \nu_j (I_\ell H^s_{\ell'}) \nonumber \\[.2cm]
& + & \sum_{\ell'\in \partial \ell_m}
 c_{\ell_m\ell'} \sum_{\ell\in \partial \ell_k} \nu_j(I_{\ell}H^i_{\ell'}).
\end{eqnarray}
The next step is to split the sums in (\ref{50}) as it has been done in, e.g., (\ref{49}), and then use (\ref{41}) and (\ref{42}).
By doing so we get
\begin{eqnarray*}
 & & \nu_j^{(n)} (I_{\ell_k} H^i_{\ell_m})
 \leq  \left[ (\bar{\kappa} + A K^{-1}) \gamma (\nu_0) + 2A  K^{-1} \lambda (\nu_0)\right]
 \sum_{\ell \in \partial \ell_k \cap {\sf U}_{j-1}} \kappa_{\ell_k\ell} \nonumber \\[.2cm]
 & & \quad + \gamma(\nu_0) \sum_{\ell \in \partial \ell_k \cap {\sf W}_{j}} \kappa_{\ell_k\ell}   +
 2 K^{-1} \varDelta_j^2 \left[\varDelta^j  \gamma(\nu_0) + \bar{c}\varDelta^{j+1} \lambda (\nu_0) \right] \nonumber \\[.2cm]
 & & \quad + 2K^{-1} \varDelta_j \widetilde{\varDelta}_j \left[ \varDelta^j \lambda (\nu_0) + j \gamma(\nu_0) + \bar{c}\lambda( \nu_0) \right]
 + 2 K^{-1} \widetilde{\varDelta}_j^2 \lambda (\nu_0)
  \nonumber \\[.2cm]
 & & \quad + \varDelta_j \left[\varDelta^j \gamma(\nu_0) +
 \bar{c}\varDelta^{j+1} \lambda (\nu_0) \right] \sum_{\ell' \in \partial \ell_m\cap {\sf U}_{j-1}} c_{\ell_m\ell'}   \nonumber \\[.2cm]
& & \quad +
 \varDelta^j \varDelta_j \lambda (\nu_0) \sum_{\ell' \in \partial \ell_m\cap {\sf W}_{j}} c_{\ell_m\ell'} +
 \widetilde{\varDelta}_j (j \gamma(\nu_0) + \bar{c} \lambda (\nu_0) ) \sum_{\ell' \in \partial \ell_m\cap {\sf U}_{j-1}}
 c_{\ell_m\ell'}  \nonumber \\[.2cm]
& & \quad + \widetilde{\varDelta}_j  \lambda (\nu_0) \sum_{\ell' \in \partial \ell_m\cap {\sf W}_{j}} c_{\ell_m\ell'}.
   \end{eqnarray*}
In order for this to agree with the first line in (\ref{42}), it is enough that the following two estimate hold
\begin{eqnarray}
 \label{51a}
& & (\bar{\kappa} + A K^{-1}) \sum_{\ell \in \partial \ell_k \cap {\sf U}_{j-1}} \kappa_{\ell_k\ell}
+ \sum_{\ell \in \partial \ell_k \cap {\sf W}_{j}} \kappa_{\ell_k\ell}
+ 2K^{-1} \varDelta_j^2 \varDelta^j \\[.2cm]
& & \quad +
2K^{-1} j \varDelta_j \widetilde{ \varDelta}_j  + \varDelta_j \varDelta^j \sum_{\ell' \in \partial \ell_m\cap {\sf U}_{j-1}} c_{\ell_m\ell'} +
\widetilde{ \varDelta}_j \sum_{\ell' \in \partial \ell_m\cap {\sf U}_{j-1}} c_{\ell_m\ell'} \nonumber \\[.2cm] & & \quad \leq \varDelta^{j+1}\nonumber \\[.3cm]
\label{51b}
& & 2 A K^{-1} \sum_{\ell \in \partial \ell_k \cap {\sf U}_{j-1}} \kappa_{\ell_k\ell} + 2K^{-1} \varDelta_j^2 \bar{c} \varDelta^{j+1} +
2K^{-1} \varDelta_j \widetilde{\varDelta}_j \varDelta^{j} \\[.2cm] &  & \quad  +
2K^{-1} \bar{c} \varDelta_j \widetilde{\varDelta}_j + 2K^{-1}  \widetilde{\varDelta}_j^2 + \bar{c} \varDelta_j \varDelta^{j+1}
\sum_{\ell' \in \partial \ell_m\cap {\sf U}_{j-1}} c_{\ell_m\ell'} \nonumber \\[.2cm]
& & \quad + \varDelta_j \varDelta^{j} \sum_{\ell' \in \partial \ell_m\cap {\sf W}_{j}} c_{\ell_m\ell'} +  \bar{c} \widetilde{\varDelta}_j
\sum_{\ell' \in \partial \ell_m\cap {\sf U}_{j-1}} c_{\ell_m\ell'} \nonumber \\[.2cm]
& & + \quad \widetilde{\varDelta}_j
\sum_{\ell' \in \partial \ell_m\cap {\sf W}_{j}} c_{\ell_m\ell'} \leq \bar{c} \varDelta^{j+2}.  \nonumber
\end{eqnarray}
Taking into account that $\bar{\kappa} < 1$ and (\ref{26b}) one can show that the left-hand side of (\ref{51a}) does not exceed
\begin{gather*}
1 + \bar{c}/2 + 2 K^{-1} \varDelta^j \varDelta_j \left( \varDelta_j + \widetilde{\varDelta}_j (j/\varDelta^j)\right) + \bar{c} \varDelta^{j+1}
  \\[.2cm]\leq 1 + \bar{c}/2 + \bar{c}/2 + \bar{c} \varDelta^{j+1} < 2 + \frac{1}{\varDelta^{\chi}} < \varDelta^{j+1}.
\end{gather*}
To prove (\ref{51b}) we use (\ref{26b}), (\ref{13}), the inequality $\varDelta_j \widetilde{\varDelta}_j \leq \varDelta^2/4$, and perform
the following calculations
\begin{gather*}
{\rm LHS(\ref{51b})} \leq 2AK^{-1} \bar{\kappa} + \frac{1}{2} K^{-1}  \varDelta^{j+2} + 2K^{-1} \left(\varDelta_j^2 +
\bar{c} \varDelta_j \widetilde{\varDelta}_j + \widetilde{\varDelta}^2_j\right)  \\[.2cm]
+ \varDelta_j \sum_{\ell' \in \partial \ell_m\cap {\sf U}_{j-1}} c_{\ell_m\ell'} +
\varDelta^j \varDelta_j \sum_{\ell' \in \partial \ell_m\cap {\sf W}_{j}} c_{\ell_m\ell'} \\[.2cm]
+ \bar{c} \widetilde{\varDelta}_j \sum_{\ell' \in \partial \ell_m\cap {\sf U}_{j-1}} c_{\ell_m\ell'} +
\widetilde{\varDelta}_j \sum_{\ell' \in \partial \ell_m\cap {\sf W}_{j}} c_{\ell_m\ell'}\\[.2cm]
\leq \bar{c} + \frac{\bar{c} \varDelta^{j+2}}{8 \varDelta^{\chi+1}} + \frac{\bar{c} \varDelta^{2}}{2 \varDelta^{\chi+1}} + \bar{c} \varDelta^{j+1}
+ \bar{c} \varDelta < \bar{c} \varDelta^{j+2},
\end{gather*}
which holds even for $j=1$, $\chi = 2$, and $\varDelta = 2$.

Next, for $k\leq n$,  by (\ref{24}) we have
\begin{equation}
  \label{52}
\nu_j^{(n)} (I_{\ell_k} H^i_{\ell_k})= \nu_j^{(k)} (I_{\ell_k} H^i_{\ell_k}) \leq
\sum_{\ell\in \partial \ell_k} \nu_j (I_\ell) + \sum_{\ell, \ell'\in \partial \ell_k} c_{\ell_k \ell'} \nu_j(I_\ell H^i_{\ell'})
\end{equation}
As above, we split the sums in (\ref{52}) and then use (\ref{41}) and (\ref{42}), and obtain
\begin{eqnarray}
  \label{53}
& & \nu_j^{(n)} (I_{\ell_k} H^i_{\ell_k}) \leq \varDelta_j \left[\bar{\kappa} + A K^{-1} \right] \gamma(\nu_0) +
\varDelta_j 2A K^{-1} \lambda (\nu_0) \nonumber\\[.2cm] & &  \quad
+ \widetilde{\varDelta}_j \gamma(\nu_0) + \varDelta_j \left[ \varDelta^j \gamma(\nu_0) +
\bar{c} \varDelta^{j+1} \lambda(\nu_0) \right]\sum_{\ell' \in \partial \ell_k \cap {\sf U}_{j-1}} c_{\ell_k \ell'} \nonumber \\[.2cm]
& &  \quad
+ \varDelta_j \varDelta^j\lambda (\nu_0) \sum_{\ell' \in \partial \ell_k \cap {\sf W}_{j}} c_{\ell_k \ell'} +
\widetilde{\varDelta}_j (j \gamma(\nu_0)
+ \bar{c} \lambda (\nu_0)) \sum_{\ell' \in \partial \ell_k \cap {\sf U}_{j-1}} c_{\ell_k \ell'} \nonumber \\[.2cm]
& &  \quad
+ \widetilde{\varDelta}_j  \lambda(\nu_0) \sum_{\ell' \in \partial \ell_k \cap {\sf W}_{j}} c_{\ell_k \ell'}.
\end{eqnarray}
In order for this to agree with the first line in (\ref{42}), it is sufficient that the following two inequalities hold
\begin{eqnarray}
 \label{53a}
 & & \varDelta_j \left[\bar{\kappa} + A K^{-1} \right] + \widetilde{\varDelta}_j + \left( \varDelta^j  \varDelta_j + j \widetilde{\varDelta}^j
 \right) \sum_{\ell' \in \partial \ell_k \cap {\sf U}_{j-1}} c_{\ell_k \ell'} \leq \varDelta^{j+1}, \qquad \quad
 \\[.3cm]
 \label{53b}
 & & 2 A K^{-1} \varDelta_j + \bar{c} \varDelta^{j+1}  \varDelta_j \sum_{\ell' \in \partial \ell_k \cap {\sf U}_{j-1}} c_{\ell_k \ell'}
 + \varDelta^{j}  \varDelta_j \sum_{\ell' \in \partial \ell_k \cap {\sf W}_{j}} c_{\ell_k \ell'}\\[.2cm]
& & \quad  +  \bar{c} \widetilde{\varDelta_j}  \sum_{\ell' \in \partial \ell_k \cap {\sf U}_{j-1}} c_{\ell_k \ell'} +
\widetilde{\varDelta}_j  \sum_{\ell' \in \partial \ell_k \cap {\sf W}_{j}} c_{\ell_k \ell'} \leq \bar{c} \varDelta^{j+2}. \nonumber
\end{eqnarray}
By means of (\ref{26b}) we get
\begin{gather*}
 {\rm LHS(\ref{53a})} \leq \varDelta + \varDelta AK^{-1} + \bar{c} \varDelta^{j+1} < \varDelta + \frac{1}{2 \varDelta^{\chi-1}} + 1 < \varDelta^{j+1}.
\end{gather*}
Similarly,
\begin{gather*}
 {\rm LHS(\ref{53b})} \leq \bar{c} \varDelta_j + \varDelta_j \sum_{\ell' \in \partial \ell_k \cap {\sf U}_{j-1}} c_{\ell_k \ell'} +
\varDelta^{j}  \varDelta_j \sum_{\ell' \in \partial \ell_k \cap {\sf W}_{j}} c_{\ell_k \ell'} \\[.2cm]
 + \bar{c}\widetilde{\varDelta}_j \sum_{\ell' \in \partial \ell_k \cap {\sf U}_{j-1}} c_{\ell_k \ell'}
+\widetilde{\varDelta}_j \sum_{\ell' \in \partial \ell_k \cap {\sf W}_{j}} c_{\ell_k \ell'} \\[.2cm]
\leq   \bar{c} \varDelta + \bar{c} \varDelta^j \varDelta_j + \bar{c} \widetilde{\varDelta}_j < \bar{c} \varDelta + \bar{c} \varDelta^{j+1} \leq
\bar{c} \varDelta^{j+2}.
\end{gather*}
Now we consider the case where $ m <k \leq n$. By (\ref{23}), and then by (\ref{22}), we have
\begin{eqnarray}
  \label{54}
 \nu_j^{(n)} (I_{\ell_k} H^i_{\ell_m})& = & \nu_j^{(k)} (I_{\ell_k} H^i_{\ell_m}) \leq \sum_{\ell \in \partial \ell_k} \nu_j^{(m)} (I_\ell H^i_{\ell_m})   \\[.2cm]
 & \leq &  \sum_{\ell \in \partial \ell_k} \nu_j(I_\ell) + \sum_{\ell \in \partial \ell_k} \sum_{\ell' \in \partial \ell_m}c_{\ell_m\ell'}
 \nu_j (I_{\ell} H^i_{\ell'}). \nonumber
\end{eqnarray}
Again we split the sums in (\ref{54}) and then use (\ref{41}) and (\ref{42}), and obtain that
\[
 \nu_j^{(n)} (I_{\ell_k} H^i_{\ell_m}) \leq {\rm RHS}(\ref{53}).
\]
Thus, we have that (\ref{42}) with $j+1$ holds also in this case. The proof is complete.
\subsection{The proof of Lemma \ref{dclm}}
Assume that we have given $\nu^x_{s-1}\in \mathcal{C}(\mu^x, \mu)$ with the properties in question. Then we split ${\sf D}_{N-s-1}$ into
independent subsets by taking intersections with the sets ${\sf V}_j$, as in (\ref{3}). Let $\ell^1 , \dots , \ell^m$ be a numbering of
${\sf D}_{N-s-1}\cap {\sf V}_0$. Set
\[
\tilde{\nu}^x_0 = \nu^x_{s-1} \quad {\rm and} \quad \tilde{\nu}^x_k = R_{\ell^k}  \tilde{\nu}^x_{k-1}, \qquad k=1, \dots , m,
\]
where $R_\ell$ is defined in (\ref{16}). Thus, $\tilde{\nu}_m^x$ is $\mathcal{F}_{{\sf D}_{N-1}}$-measurable, and
$\tilde{\nu}_m^x (I_\ell)$ and $\tilde{\nu}_m^x (I_\ell H^i_{\ell'})$, $\ell, \ell' \in {\sf D}_{N-s-1}$, satisfy
the inequalities in (\ref{39}) and (\ref{40}), respectively, in which  the right-hand sides contain $\gamma_{{\sf D}_{N-s}} (\nu^x_{s-1})$ and
$\lambda_{{\sf D}_{N-s}} (\nu^x_{s-1})$. Then we perform the reconstruction over the remaining independent subsets of
${\sf D}_{N-s-1}$ and obtain an element of $\mathcal{C}(\mu^x, \mu)$, which we denote by $\nu^x_s$. Its $\mathcal{F}_{{\sf D}_{N-1}}$-measurability is then guarateed
by construction, and the parameters $\gamma_{{\sf D}_{N-s-1}} (\nu^x_{s})$ and
$\lambda_{{\sf D}_{N-s-1}} (\nu^x_{s})$ satisfy the first-line inequalities in (\ref{41}) and (\ref{42}), respectively, and hence (\ref{dc3})
with $\gamma_{{\sf D}_{N-s}} (\nu^x_{s-1})$ and
$\lambda_{{\sf D}_{N-s}} (\nu^x_{s-1})$ on the right-hand side. The $\mathcal{F}_{{\sf D}_{N-1}}$-measurability of $\nu_0^x$ is straightforward.


\begin{thebibliography}{ll}

\bibitem{Mon} S. Albeverio, Yu. Kondratiev, Yu. Kozitsky, and M.
R\"ockner, {\it The statistical mechanics of quantum lattice
systems. A path integral approach.}  EMS Tracts in Mathematics, 8.
European Mathematical Society (EMS), Z\"urich, 2009.

\bibitem{BL} G. R. Belitskii and Yu. I. Lyubich, {\it Matrix norms and their applications.} Translated from the Russian by A. Iacob.
Operator Theory: Advances and Applications, 36. Birkh\"auser Verlag, Basel, 1988.

\bibitem{BP} V. Belitsky and E. A. Pechersky, Uniqueness of Gibbs state for non-ideal gas in ${\bf R}^d$: the case of multibody interaction,
{\it J. Stat. Phys.} {\bf 106}, 931--955 (2002).



\bibitem{Beth} S. A. Bethuelsen, Uniqueness of Gibbs measures, Master Thesis, Utrecht, 2012.


\bibitem{COPP} M. Cassandro, E. Olivieri, A. Pellegrinotti, and E. Presutti, Existence and uniqueness of DLR measures for
unbounded spin systems,  {\it Z. Wahrscheinlichkeitstheorie und Verw. Gebiete} {\bf 41}, 313--334  (1977/78).



\bibitem{dl} T. de la Rue, R. Fern\'andez, and A. D. Sokal, How to clean a dirty floor: probabilistic potential theory
and the Dobrushin uniqueness theorem,
{\it Markov Process. Related Fields} {\bf 14}, 1--78  (2008).

\bibitem{Dob} R. L. Dobrushin,  Description of a random field by means of conditional probabilities and conditions for its regularity. (Russian)
{\it Teor. Verojatnost. i Primenen} {\bf 13} 201--229  (1968).

\bibitem{DobP} R. L. Dobrushin and E. A. Pechersky, A criterion of the uniqueness of Gibbsian fields in the noncompact case.
{\it Probability theory and mathematical statistics (Tbilisi, 1982),} 97--110, Lecture Notes in Math., 1021, Springer, Berlin, 1983.


\bibitem{Ge} H.-O. Georgii, {\it Gibbs measures and phase transitions.} Studies in Mathematics, 9, Walter de Gruyter, Berlin New York, 1988.


\bibitem{Hagedorn} D. Hagedorn, Y. Kondratiev, T. Pasurek, and M.
R\"ockner, Gibbs states over the cone of discrete measures, {\it J.
Func. Anal.} {\bf 264}, 2550--2583 (2013).

\bibitem{KP} Y. Kozitsky and T. Pasurek,  Euclidean Gibbs measures of interacting quantum anharmonic oscillators,
{\it J. Stat. Phys.} {\bf 127}, 985--1047  (2007).

\bibitem{Ku} H. K\"unsch, Decay of correlations under Dobrushin's uniqueness condition and its applications, {\it Comm. Math. Phys.} {\bf 84},
2017--222 (1982).

\bibitem{LMP} J. L. Lebowitz, A. Mazel, and E. Presutti, Liquid-vapor phase transitions for systems with finite-range
interactions, {\it J. Stat. Phys.} {\bf 94}, 955--1025 (1999).

\bibitem{Lebow} J. L. Lebowitz and E. Presutti, Statistical mechanics of systems of unbounded spins, {\it Comm. Math. Phys.} {\bf 50}, 195--218 (1976).

\bibitem{Lind} T. Lindvall, {\it Lectures on the coupling method,} Wiley Series in Probability and Mathematical Statistics:
Probability and Mathematical Statistics. A Wiley-Interscience Publication. John Wiley \& Sons, Inc., New York, 1992.


\bibitem{Lovasz} L. Lov{\'a}sz, Three short proofs in graph theory, {\it J. Combinatorial Theory Ser. B} {\bf 19}, 269--271 (1975).


\bibitem{Mal} V. A. Malyshev and I. V. Nickolaev, Uniqueness of Gibbs fields via cluster expansions, {\it J. Stat. Phys.} {\bf 53},  375--379 (1984).

\bibitem{Pasurek} T. Pasurek, {\it Theory of Gibbs measures with unbounded spins: probabilistic and analytic aspects.} Habilitation Thesis,
Universit\"at Bielefeld, Bielefeld, 2008; available as SFB 701
Preprint 08101, 2008 at {\sf
https://www.math.uni-bielefeld.de/sfb701/preprints/view/292}.

\bibitem{Pechersky} E. Pechersky and Yu. Zhukov, Uniqueness of Gibbs state for nonideal gas in ${\bf R}^d$: the case of pair potentials,
{\it J. Stat. Phys.} {\bf 97}, 145--172 (1999).

\bibitem{Preston} Ch. Preston, {\it Specifications and their Gibbs states,} Universit\"at Bielefeld, 2005, available at
{\sf http://www.mathematik.uni-bielefeld.de/~preston/rest/gibbs/files/specifications.pdf}

\bibitem{PS} A. Procacci and B. Scoppola, On decay of correlations for unbounded spin systems with arbitrary boundary conditions,
 {\it J. Stat. Phys.} {\bf 105}, 453--482 (2001).

\bibitem{Diana} D. Putan, {\it Uniqueness of equilibrium states of
some models of interacting particle systems.} PhD Thesis,
Universit\"at Bielefeld, Bielefeld, 2014; available at  {\sf
http://pub.uni-bielefeld.de/luur/download?func=downloadFile\&recordOId=2691509\&fileOId=2691511}


\end{thebibliography}
\end{document}